\documentclass[a4paper,11pt]{article}
\usepackage[left=1.8cm,right=1.8cm,top=2.5cm,bottom=2.5cm]{geometry}
\usepackage[english]{babel}
\usepackage{amssymb, amsfonts, amsmath, amsthm}
\usepackage{authblk}
\usepackage{epstopdf}
\usepackage{xcolor}
\usepackage{booktabs}
\usepackage{bm, bbm}
\usepackage{dsfont}
\usepackage{xr-hyper}
\usepackage{caption, url}
\usepackage{diffcoeff}
\usepackage{float}
\usepackage{algorithm}
\usepackage{algpseudocode}
\usepackage{enumitem}
\usepackage{graphicx}
\usepackage{subcaption}
\usepackage{mathtools}
\usepackage[
]{hyperref}
\usepackage[normalem]{ulem}
\usepackage{tikz}
\usetikzlibrary{arrows.meta}

\newcommand{\dd}{\, \mathrm{d}}

\newcommand{\nn}{\bm{n}}
\newcommand{\vs}{\bm{s}}
\newcommand{\uu}{\bm{u}}
\newcommand{\yy}{\bm{y}}
\newcommand{\UU}{\bm{U}}
\newcommand{\ww}{\bm{w}}

\newcommand{\cc}{\bm{c}}
\newcommand{\xx}{\bm{x}}

\newcommand{\Rset}{\mathbb{R}}

\newcommand{\Pprob}{\mathbb{P}}
\newcommand{\mcA}{\mathcal{A}}
\newcommand{\mcB}{\mathcal{B}}
\newcommand{\mcC}{\mathcal{C}}

\newcommand{\mcL}{\mathcal{L}}
\newcommand{\mcX}{\mathcal{X}}
\newcommand{\mcU}{\mathcal{U}}
\newcommand{\mcV}{\mathcal{V}}
\newcommand{\mcW}{\mathcal{W}}
\newcommand{\mcY}{\mathcal{Y}}
\newcommand{\mcP}{\mathcal{P}}
\newcommand{\Pd}{\mathcal{P}_d}

\DeclareMathOperator{\tr}{Tr}
\DeclareMathOperator{\var}{Var}
\newcommand{\cov}[2]{\text{Cov}\left(#1, #2\right)}
\newcommand{\ev}[1]{\mathbb{E}\left[#1\right]}
\newcommand{\cev}[2]{\mathbb{E}_{#2}\left[#1\right]}
\newcommand{\mycev}[2]{\mathbb{E}\left[#1\vert\, #2\right]}

\definecolor{tabblue}{RGB}{76,114,176}
\definecolor{taborange}{RGB}{221,132,82}

\newtheorem{definition}{Definition}[section]
\newtheorem{theorem}[definition]{Theorem}
\newtheorem{assumption}[definition]{Assumption}
\newtheorem{proposition}[definition]{Proposition}

\numberwithin{equation}{section}

\algrenewcommand\algorithmicrequire{\textbf{Input:}}
\makeatletter
\newenvironment{algorithmenv}
    {
        \par
        \addvspace{\medskipamount}
        \refstepcounter{algorithm}%
        \hrule height .8pt depth 0pt
        \kern 2pt
        \renewcommand{\caption}[2][\relax]
        {
            {\raggedright
             \textbf{\ALG@name~\thealgorithm} ##2\par}
            \ifx\relax##1\relax
                \addcontentsline{loa}{algorithm}
                  {\protect\numberline{\thealgorithm}##2}
            \else
                \addcontentsline{loa}{algorithm}
                  {\protect\numberline{\thealgorithm}##1}
            \fi
            \kern 2pt
            \hrule
            \kern 2pt
        }
    }
    {
        \kern 2pt
        \hrule
        \par
        \addvspace{\medskipamount}
    }
\makeatother

\newlength{\AlgAssignLHS}
\newcommand{\AlgAssign}[2]{
    \settowidth{\AlgAssignLHS}{#1 $\gets$\ }
    \State
        \makebox[\AlgAssignLHS][l]{#1 $\gets$\ }
        \parbox[t]{\dimexpr\linewidth-\AlgAssignLHS-2em\relax}
        {
            \raggedright
            #2
        }   
}

\graphicspath{ {./figures} }

\title{Global Sensitivity Analysis of Spatial Sets: Finite-Element-Based Estimation and the Role of Observation Windows}
\author[1]{Farbod Chamanian}
\author[1,2]{Chiara Piazzola}
\author[1]{Elisabeth Ullmann}
\affil[1]{Department of Mathematics, TUM School of Computation, Information and Technology, Technical University of Munich, Germany}
\affil[2]{ETS de Ingeniería de Caminos, Canales y Puertos, Universitat Politècnica de Catalunya, Barcelona, Spain}
\date{}

\begin{document}

\maketitle

\section*{Abstract}
In this work we consider numerical models with random input parameters, where the model output is a spatially distributed random field.
We carry out a statistical sensitivity analysis of spatial sets which arise for instance when the model output exceeds a critical threshold. 
We consider two approaches: $(i)$ a kernel-based sensitivity analysis working with the Hilbert--Schmidt Independence Criterion, and $(ii)$ a function-valued sensitivity analysis working with generalized Sobol' indices, where the model output is the indicator function of the spatial set of interest.
We develop efficient estimators for the sensitivity indices, especially for finite-element-based numerical models, where we approximate volume integrals in the sensitivity measures by finite element quadrature, thereby avoiding Monte Carlo sampling over the spatial domain. 
This improves the state-of-the-art for the kernel-based indices and makes these methods practically accessible for expensive numerical models. 
For the generalized Sobol' indices we utilize the same finite element quadrature for efficient computation.
We present numerical experiments for a hydrogen combustion process, modeled by a coupled system of convection-diffusion-reaction equations in a two-dimensional spatial domain, and ask whether the temperature remains below a critical value in selected regions of the combustion domain.
Our results show that in this example the kernel-based and the generalized Sobol' indices give qualitatively similar input importance rankings. Moreover, the ranking depends on the chosen observation window and can flip between different windows. The algorithms for implementing the proposed methods are provided in the supplementary materials and in an open source code repository.

\bigskip

\noindent
\textbf{Keywords:} uncertainty quantification, Sobol' indices, HSIC-ANOVA indices, partial differential equations with random coefficients, random sets

\bigskip

\noindent
\textbf{AMS Subject Classification:} 35-04, 
62-04, 
62P30 

\section{Introduction}
Across scientific fields, models serve as representations of systems, processes, and observed phenomena that embody our understanding and assumptions about their behavior.
However, models inevitably are approximations, constrained by the assumptions and simplifications used to construct them. Uncertainty quantification (UQ) aims to determine appropriate uncertainty measures associated with model-based predictions by modeling unknowns as random objects equipped with a suitable probability distribution \cite{handbookOfUQ}. 
Sensitivity Analysis (SA) is a branch of UQ which assesses the influence of each model input on the model output \cite{SA_book}.
Local Sensitivity Analysis (LSA) examines the effect of perturbing a specific input around a fixed value, typically using partial derivatives of the input-output map. 
Global Sensitivity Analysis (GSA) evaluates the influence of input parameters over their entire domain  using sensitivity measures which are based on the variance or other distributional information of the model output \cite{SA_book}.

In this work we focus on models involving partial differential equations (PDEs) with spatial, temporal, or spatiotemporal output functions.
Our leading example is a hydrogen combustion process, modeled by a coupled system of convection-diffusion-reaction PDEs in a two-dimensional domain with five random input parameters and four output random fields \cite{paper:CDR_paper2, paper:CDR_paper1}.
We are interested to learn whether the temperature remains below a critical value in selected spatial sub-regions of the combustion domain. 
Specifically, we consider the questions: \textit{How influential are the random inputs relative to each other and in absolute terms, in determining whether the temperature in a sub-region of the combustion domain remains below a critical value? 
Does the importance ranking of the inputs change across the spatial domain?}

We follow two approaches to study the critical-value-exceedance: In the direct approach, the model output is a random set and we carry out SA for set-valued outputs as introduced by Fellmann et al. in \cite{fellmann_kernelSA}. Alternatively, we consider function-valued outputs as in Gamboa et al. \cite{gamboa_SA_for_multiDim_and_funtional_outputs}, where the output is the indicator function associated with the random set considered in the set-valued, direct approach.
In the set-valued approach, the concept of a derivative of a set-valued map is not easily defined. 
In the function-valued approach, the input-output map is not differentiable.
Hence we employ derivative-free GSA techniques.  

Sensitivity analysis has typically been carried out for models with scalar-valued outputs \cite{de2008_uncertaintyInIndustrialPractice_aGuide, handbookOfUQ, iooss2015_GSA_in_UncertaintyManagementBook}.
However, for PDE-based models the outputs are often function-valued which, upon discretization, give rise to vector-valued outputs.
Set-valued outputs are also possible, such as in the hydrogen combustion model discussed above.
This brings novel challenges: SA methods for functional and set-valued outputs are considerably less developed, and the theoretical and computational complexity increases significantly compared to scalar-valued outputs.
The following selected works demonstrate this clearly.
Marrel et al. \cite{Marrel2010, Marrel2014} compute spatial maps of Sobol' indices for spatial and spatio-temporal model outputs, respectively.
De Lozzo et al. \cite{DeLozzo2017} use variance-based and kernel-based sensitivity measures for spatio-temporal model outputs.
Alexanderian et al. \cite{timeDependent_VBSA} carry out a variance-based SA for time-dependent models using the generalized Sobol' indices in \cite{gamboa_SA_for_multiDim_and_funtional_outputs}. 
Perrin et al. \cite{Perrin2021} propose a functional principal component analysis for spatial outputs. Fellmann et al. \cite{fellmann_kernelSA} introduce a set-kernel and carry out a kernel-based SA for set-valued model ouputs. 

If one starts with no assumptions on the model and allows for nonlinearity and non-monotonocity, de Rocquigny et al. \cite{de2008_uncertaintyInIndustrialPractice_aGuide} suggest the use of Sobol' indices as the sensitivity measure. 
Sobol' indices \cite{origSobolPaper_1, origSobolPaper_2} are well-known sensitivity measures based on the Sobol'--Hoeffding or functional analysis-of-variance (ANOVA) decomposition \cite{SA_book, Hoeffding:1948, origSobolPaper_1} for statistically independent model inputs and scalar-valued model outputs.
Gamboa et al. \cite{gamboa2013sensitivity_multivariateSobols} define Sobol' indices for models with multivariate outputs.
In our setting we analyze the sensitivity of the temperature field in selected spatial regions.
To be able to use the Sobol' indices for multivariate outputs, we could discretize the spatial region by a grid and define the model output as a vector which collects the temperature evaluated at the grid points.
Instead, we work directly with functional outputs and employ the generalized Sobol' indices introduced by Gamboa et al. \cite[Sec.~6]{gamboa_SA_for_multiDim_and_funtional_outputs}.
Notably, the Sobol' indices in \cite{gamboa2013sensitivity_multivariateSobols} are defined in terms of the trace of covariance matrices associated with the model output.
Analogously, the generalized Sobol' indices in \cite{gamboa_SA_for_multiDim_and_funtional_outputs} are defined in terms of the trace of suitable covariance operators associated with the model output.
Thus, both of these Sobol' indices neglect the spatial correlations of the model output. 
Moreover, the typical pick-freeze estimators for the Sobol' indices \cite{gamboa_pickFreeze, janon2014asymptotic_of_aggr_and_scal_PickFreeze} are expensive with a cost that grows linearly in the number of model inputs.
Therefore, we also study kernel-based sensitivity indices, specifically the HSIC-ANOVA indices  \cite{daVeiga2021kernel, SA_book}, which employ the Hilbert--Schmidt Independence Criterion (HSIC) \cite{Gretton2005_CrossCovIntro}.
These indices can handle set-valued outputs directly \cite{fellmann_kernelSA} and are relatively inexpensive to estimate with a cost that does not depend on the number of model inputs \cite{daVeiga2021kernel, fellmann_kernelSA}. 
We employ the generalized Sobol' indices as baseline for the comparison with the HSIC-ANOVA indices.
We note that the generalized Sobol' and the HSIC-ANOVA indices admit an ANOVA-like decomposition and are thus suitable for the ranking of the model inputs.
Moreover, both indices do not exclusively use the variance or other moments of the process model output.
Indeed, in our setting the generalized Sobol' indices use (pointwise in the spatial domain) the full distribution information of the process model because the process model is composed with the indicator function of the random set. 

Kernel-based sensitivity indices, see e.g. \cite{Barr:2022, daVeiga2015global, durrande2013anova, 2026TotalHSICwithDependentInputs}, employ the rich theory of Reproducing Kernel Hilbert Spaces (RKHSs) to develop sensitivity measures.
For example, \cite{daVeiga2015global, 2026TotalHSICwithDependentInputs} embed probability measures into an RKHS and use the Hilbert--Schmidt Independence Criterion.
Similarly to Sobol' indices, kernel-based indices have initially been developed for scalar-valued model outputs. 
However, a major advantage of the kernel-based approach is that the probability measures can be defined on measurable spaces without requiring a vector space structure. 
This allows one to work with set-valued model outputs.
Recently, Fellmann et al. \cite{fellmann_kernelSA} introduced a kernel for Lebesgue-measurable sets, motivated by the sensitivity analysis of spatial sets, and adapted the kernel-based indices in \cite{daVeiga2015global} to set-valued outputs. 
Moreover, Fellmann et al.  \cite{fellmann_kernelSA} suggest a Monte Carlo sampling scheme in the spatial region of interest to numerically evaluate the set kernel. 

In this work we focus on PDE-based models discretized by the finite element method (FEM).
This brings naturally the finite element (FE) basis for the spatial function approximation.
Moreover, it allows an efficient spatial integration using the FE mass (Gram) matrix which is routinely available in FE codes.
Using the FEM technology, we introduce in Subsection~\ref{section:HSIC_estimation} an improved method to approximate the evaluation of the set kernel in the HSIC-ANOVA index for set-valued outputs \cite{fellmann_kernelSA}. 
The same FEM technology is also used to estimate the spatially-integrated Sobol' indices defined in Section~\ref{section:SpIn}, with the estimator presented in Subsection~\ref{section:SpIn_estimation}.
Equipped with the efficient estimators, we then study the aforementioned sensitivity analysis question on the temperature field of the hydrogen combustion problem.
Our results in Subsection~\ref{subsection:numerical_results} show that the choice of the observation window matters in the combustion process, as it can flip the importance rankings of the random inputs.
Finally, we present all algorithms that implement these methods in the supplementary materials together with an open-source code base.

\subsection{Organization}
This article is structured as follows. 
In Section~\ref{section:problem_description} we formulate the problem, the relevant input-output maps, and assumptions. Section~\ref{section:kernel_based} recalls the kernel-based sensitivity indices using the HSIC. Section~\ref{section:SpIn} discusses the generalized Sobol' indices for function-valued outputs, in particular, indicator functions of spatial sets.
Section~\ref{section:estimation_of_sensitivity_indices} presents computable estimators for the indices in this work with emphasis on FEM-based numerical models.  Section~\ref{section:cdr_results} presents the hydrogen combustion model and its discretization, and numerical experiments, including a qualitative comparison of the indices and a study with different observation windows in the spatial domain. 
Section~\ref{section:conclusion} offers concluding remarks. 

\subsection{Notation}
For a positive integer $d$, let $1:d$ denote the set $\{1,2,\dots,d\}$.
Let $\Pd$ denote the set of all subsets of $1:d$.
For a subset $A\in \Pd$, let $A^c$ denote the set $1:d\backslash A$.
For an integer $j \in 1:d$, we use $j$ to represent the set $\{j\}$ whenever the context is clear.
Moreover, let $-j$ denote the set $1:d\backslash \{j\}$.
Let $|A|$ denote the cardinality of $A$. 
For ${\bm u} \in \mathbb{R}^d$, let $\uu_A$ denote the $|A|$-dimensional vector containing only the elements $u_j$, where $j \in A$. 
For $A=\emptyset$ we define $\uu_\emptyset=\bm{0}$.
For the random vector $\UU$ taking values in $\mathbb{R}^d$, let $\UU_A$ denote the random vector containing only the components $U_j$, where $j \in A$.
For $A=\emptyset$ we define $\UU_\emptyset:=0$.
The empty set $\emptyset$ is denoted by $0$ whenever the context is clear.
The Euclidean vector norm is denoted by $\Vert \cdot \Vert$.

\section{Problem description}\label{section:problem_description}

Let $(\Omega, \mathcal{F}, \Pprob)$ be a probability space and consider a random vector $\UU =(U_1, \ldots, U_{n_U})^\top$ with realizations in $ \mathbb{R}^{n_U}$ whose components are independent random variables. 
We define 
$
\mathcal{U}
=
\bigtimes_{i=1}^{n_U}\mathcal{U}_i
\subseteq
\mathbb{R}^{n_U}
$ such that 
\begin{align*}
\UU:\Omega \longrightarrow \mathcal{U},  \quad \omega \longmapsto \uu:=\UU(\omega).
\end{align*}
We denote the associated probability distribution by $\Pprob_{\UU}$ for which, due to independence, it holds $\Pprob_{\UU}=\bigotimes_{i=1}^{n_U}\Pprob_{U_i}$, where $\Pprob_{U_i}$ is the probability distribution of each random variable $U_i$.
{Further, let $\mcX \subset \Rset^{n_x}$, $n_x \in \mathbb{N}$ be an open and bounded set with Lipschitz boundary,} for example, the spatial domain associated with a PDE-based model.
We consider the following process model
\begin{equation}\label{equation:model_output}
Y = G({\xx},  {\UU}), \quad {\xx} \in \mcX \subset \Rset^{n_x},
\end{equation}
defined via the deterministic, measurable map 
\begin{equation}\label{equation:G}
G:\mcX \times \mcU\rightarrow \mcY\subset\mathbb R.
\end{equation} 
The model output $Y$ is thus a random field \cite{Adler2010, Christakos1992}
\begin{equation}\label{equation:YRF}
Y:\mcX\times\Omega\rightarrow\mcY,\quad({\xx},\omega)\mapsto G({\xx},{\UU}(\omega)),
\end{equation}
whose randomness is induced by the random input vector $\UU$. 

We are interested in subsets $\mcX_\text{obs}\subseteq \mcX$ for which the model output satisfies a prescribed constraint. 
The set $\mcX_\text{obs}$ is called \textit{observation window}.
Let $\mcA\subseteq\mathcal{Y}$ 
denote the set of admissible outputs. 
For every realization $\uu\in\mathcal{U}$, we define the corresponding \textit{feasible set}
\begin{equation}\label{equation:feasible_set}
\Gamma(\uu)
=
\{\xx\in\mathcal{X}_\text{obs}\colon G(\xx,\uu)\in\mcA\}
\end{equation}
which can be viewed as a realization of a random set \cite{Molchanov2017}.  
Unless otherwise stated we use $\mcX_\text{obs}=\mcX$ in the following.
We return to the study of observation windows in Section~\ref{section:cdr_results}.
To formalize the random set contruction, we proceed as in \cite{fellmann_kernelSA}. 
Let $\mathcal{L}(\mathcal{X})$ denote the space of all Lebesgue-measurable subsets of $\mathcal{X}$, and let $\lambda$ be the Lebesgue measure on $\mathcal{X}$. Let 
$
\delta(\Gamma_1,\Gamma_2)
=
\lambda(\Gamma_1\Delta\Gamma_2)
$ 
be the Lebesgue measure of the symmetric difference between $\Gamma_1$ and $\Gamma_2$, i.e.~$ \Gamma_1\Delta\Gamma_2 = (\Gamma_1\cup\Gamma_2)\setminus(\Gamma_1\cap\Gamma_2)$. 
{$\delta$ is a pseudometric on $\mathcal{L}(\mcX)$ and it holds $\delta(\Gamma_1,\Gamma_2)=\int_\mcX |\mathbbm{1}_{\Gamma_1}-\mathbbm{1}_{\Gamma_2}| \dd \lambda$.}
Quotienting $\mathcal{L}(\mcX)$ by the equivalence relation $\Gamma_1 \sim \Gamma_2$ iff $\delta(\Gamma_1,\Gamma_2)=0 $ yields the separable metric space $\mathcal{L}^*(\mcX)$, 
equipped with the Borel $\sigma$-algebra $\mathcal{B}_{\Gamma} = \mathcal{B}(\mathcal{L}^*(\mcX),\delta)$.
{In the following we slightly abuse the terminology and refer to the equivalence classes in the quotient space $\mathcal{L}^\ast(\mcX)$ as sets.}
The feasible-set construction therefore defines the set-valued map
\begin{equation}\label{eta}
\eta:\mathcal{U}\rightarrow\mathcal{L}^*(\mathcal{X}),
\qquad
\uu\mapsto \Gamma := \eta(\uu).
\end{equation}
Consequently, provided that $\eta$ is a measurable map from $(\mcU,\mathcal{B}_{\mcU})$ to $(\mathcal{L}^*(\mathcal{X}),\mathcal{B}_{\Gamma})$, where $\mathcal{B}_{\mcU}$ is the Borel $\sigma$-algebra on $\mcU$, the random input vector $\UU$ induces the random set $\Gamma(\UU) = \Gamma(\UU(\omega))$.
The probability distribution of $\Gamma$ is denoted by $\Pprob_{\Gamma}$ and is the push-forward of $\Pprob_{\UU}$ by $\eta$. We denote by $\Pprob_{\UU,\Gamma}$ be the joint distribution of $\UU$ and $\Gamma$.

The aim of this work is to quantify the intensity of dependence between a process model's input parameters, collected in the random vector $\UU$, and the corresponding set-valued outputs $\Gamma(\UU)$. 
In Section \ref{section:kernel_based} we discuss the so-called HSIC-ANOVA indices \cite{daVeiga2021kernel, fellmann_kernelSA} which are kernel-based and can handle set-valued outputs. 
In Section \ref{section:SpIn} we consider generalized Sobol' indices for functional outputs \cite{gamboa_SA_for_multiDim_and_funtional_outputs}. We bridge to the set-valued outputs by assigning to a random set $\Gamma(\UU)$ the indicator random field 
\begin{equation}\label{indicatorRF}
\mathds{1}_{\Gamma}\colon \mcX \times \Omega \rightarrow \mathbb{R}, \quad (\xx,\omega) \mapsto \mathds{1}_{\{G(\xx,\UU(\omega))\in \mcA\}},
\end{equation}
whose randomness is again induced by $\UU$.
We consider $\mathds{1}_\Gamma \in L^2(\Omega; L^2(\mcX))$, the Bochner space of strongly measurable, $L^2(\mcX)$-valued random variables, where $L^2(\mcX)$ is equipped with the spatial $L^2$ norm.
To formalize the construction we define the function-valued map
\begin{equation}\label{theta}
\theta\colon \mcU \rightarrow L^2(\mcX), \qquad \uu\mapsto \mathds{1}_\Gamma := \theta(\uu).
\end{equation}
Provided that $\theta$ is a measurable map from $(\mcU,\mathcal{B}_{\mcU})$ to $(L^2(\mcX),\mathcal{B}(L^2(\mcX))$, where $\mathcal{B}(L^2(\mcX))$ is the Borel $\sigma$-algebra generated by all open subsets in $L^2(\mcX)$, the random input vector $\UU$ induces the random function $\mathds{1}_{\Gamma(\UU)} = \mathds{1}_{\Gamma(\UU(\omega))}$.
The probability distribution of $\mathds{1}_\Gamma$ is denoted by $\Pprob_{\mathds{1}_\Gamma}$ and is the push-forward of $\Pprob_{\UU}$ by the map $\theta$. 

Under suitable assumptions the sensitivity indices in Section~\ref{section:kernel_based} and Section~\ref{section:SpIn} admit ANOVA-like decompositions, yet they differ in how they measure the input parameters' influence.
The kernel-based indices in Section~\ref{section:kernel_based} investigate kernel mean embeddings of the measures $\Pprob_{\UU_A,\Gamma}$ and $\Pprob_{\UU_A} \otimes \Pprob_{\Gamma}$ which are defined on the Cartesian product of a subset of the input space \textit{and} the output space of the map $\eta$ in \eqref{eta}.
They can be estimated with samples of the joint input-output distribution {without} conditioning.
In contrast, the generalized Sobol' indices in Section~\ref{section:SpIn} are based on the (conditional) measures $\Pprob_{\mathds{1}_{\Gamma}}$ and $\Pprob_{\mathds{1}_{\Gamma}\vert \UU_A}$ which are defined on the output space of the map $\theta$ in \eqref{theta}.
The conditioning may increase the estimation cost (see Section~\ref{section:estimation_of_sensitivity_indices}).
Furthermore, we note that the sensitivity indices in this work are based on two different maps which can be derived from the process model in \eqref{equation:G}.
The HSIC-ANOVA indices in Section~\ref{section:kernel_based} are defined for the set-valued map $\uu \mapsto \Gamma(\uu)$ in \eqref{eta}.
The generalized Sobol' indices in Section~\ref{section:SpIn} are defined for the function-valued map $\uu \mapsto \mathbbm{1}_{\Gamma(\uu)}$ in \eqref{theta}. 

{Finally, we define a compact subset of $\mathcal{L}^\ast(\mcX)$ and formulate assumptions on the measure $\Pprob_{\Gamma}$ in the set-valued approach.
The assumptions are motivated by Mercer's Theorem (see e.g. \cite[Theorem 2.1]{Muandet2017}) and are used in Section~\ref{section:kernel_based} to derive the ANOVA-like decomposition of the kernel-based indices, see Proposition~\ref{proposition:hsic_decomposition}.}
{Let $c>0$ denote a constant. Let $\mathcal{L}_c(\mcX) \subset \mathcal{L}(\mcX)$ denote the space of all Lebesgue-measurable subsets of $\mcX$ with finite perimeter which is uniformly upper bounded by $c$.
Then $\mathcal{L}^{\ast}_c(\mcX)$ is defined as the quotient space of $\mathcal{L}_c(\mcX)$ with respect to the equivalence relation $\Gamma_1 \sim \Gamma_2$ iff $\delta(\Gamma_1, \Gamma_2)=0$ for $\Gamma_1, \Gamma_2 \in \mathcal{L}_c(\mcX)$. We can now formulate the following assumption.}
\begin{assumption}\label{ass:set}
{The measure $\Pprob_\Gamma$ is supported on $\mathcal{L}^\ast_c(\mcX)$.}
\end{assumption}
{Assumption~\ref{ass:set} formalizes the requirement that the realizations of the random set $\Gamma(\UU)$ have finite, uniformly bounded perimeters.
That $\mathcal{L}^\ast_c(\mcX)$ is compact follows from classical arguments in Analysis: 
We assign to a sequence $(S_k)_{k \in \mathbb{N}}$ with elements in $\mathcal{L}^\ast_c(\mcX)$ the sequence of indicator functions $(\mathbbm{1}_{S_k})_{k \in \mathbb{N}}$.
By assumption every $S_k$ has a finite perimeter which is equivalent to $\mathbbm{1}_{S_k} \in {BV}(\mcX)$, where ${BV}(\mcX)$ denotes the space of functions on $\mcX$ with bounded variation (BV) \cite[Chap.~5]{EvansGariepy2015}. Moreover, the BV norm of all functions $(\mathbbm{1}_{S_k})_{k \in \mathbb{N}}$ is uniformly bounded. By \cite[Theorem 5.5]{EvansGariepy2015} there exists an $L^1$-convergent subsequence of $(\mathbbm{1}_{S_k})_{k \in \mathbb{N}}$ with a limit element in $BV(\mcX)$.
This limit element is again an indicator function $\mathbbm{1}_S$ and the associated set $S$ has a finite perimeter. Since the perimeter is a lower semicontinuous function with respect to $L^1$ convergence \cite[Theorem 5.2]{EvansGariepy2015}, the perimeter of $S$ is upper bounded by $c$. Finally, the $L^1$ convergence of the indicator functions is equivalent to the convergence of the associated sets with respect to the symmetric difference measure $\delta$. Thus, $\mathcal{L}^\ast_c(\mcX)$ is sequentially compact which is equivalent to compactness in a metric space.
}

\section{Kernel-based sensitivity indices} \label{section:kernel_based}

In the following, we recall the theoretical foundations of kernel-based sensitivity indices for set-valued outputs. Their construction is based on an ANOVA decomposition of the Hilbert–Schmidt independence criterion, a measure of statistical dependence between random quantities. 
The formulation relies on the theory of Reproducing Kernel Hilbert Spaces, kernel mean embeddings, and cross-covariance operators that we recall in Section \ref{subsection:theory_of_kernels} before giving the definition of the sensitivity indices in Section \ref{subsection:kernel_based_sensitivity_index}.

\subsection{Mathematical preliminaries} \label{subsection:theory_of_kernels}

Consider the probability space $(\mcV,\mathcal{B}_{\mcV},\Pprob_{\mcV})$, where $\mcV$ is a separable {metric} space, $\mathcal{B}_{\mcV}$ the corresponding Borel $\sigma$-algebra, and $\Pprob_{\mcV}$ a probability measure. 
A function $k:\mcV\times\mcV\rightarrow\mathbb{R}$ is called a \textit{kernel} if it is symmetric and positive definite.
Kernels allow one to embed elements of $\mcV$ into a Hilbert space of functions, the \textit{Reproducing Kernel Hilbert Space}.

\begin{definition}[Reproducing Kernel Hilbert Space]\label{definition:rkhs} 
    Let $\mathcal{H}$ be a Hilbert space of real-valued functions on $\mcV$. $\mathcal{H}$ is a \textit{Reproducing Kernel Hilbert Space (RKHS)} if there exists a kernel $k:\mcV\times\mcV\rightarrow\mathbb{R}$ such that, for all $v\in\mcV$, 
    \begin{enumerate} [label=\roman*.]
        \item $k(v,\cdot)\in\mathcal{H}$;
        \item $f(v)=\langle{f,k(v,\cdot)}\rangle_{\mathcal{H}}$ for all $f\in\mathcal{H}$.
    \end{enumerate} 
    The kernel $k$ is called the \textit{reproducing kernel} of $\mathcal{H}$.
\end{definition}
Every RKHS admits a unique reproducing kernel. Conversely, every kernel determines uniquely (up to an isomorphism) a RKHS for which it is the reproducing kernel \cite{kernel_RKHS_uniqueness}. 
From now on, we denote by $\mathcal{H}_k$ the RKHS associated to the reproducing kernel $k$.

This construction can be generalized to product spaces $\mcV = \bigtimes_{i=1}^p\mcV_i $, where $\mcV_i$, $i=1,\ldots,p$ are separable {metric} spaces. 
Let $\mathcal{H}_{k_i}$, $i=1,\ldots,p$ be the corresponding RKHS with reproducing kernels $k_i:\mcV_i\times\mcV_i\rightarrow\mathbb{R}$. 
The product kernel on $\mcV \times \mcV$ is defined by 
\begin{equation}\label{eq:product_kernel}
    k \left( (v_{1}^{(1)},\ldots, v_{p}^{(1)}), \, (v_{1}^{(2)},\ldots,v_{p}^{(2)}) \right):= \prod_{i=1}^pk_i(v_i^{(1)},v_i^{(2)}), \quad v^{(1)}, v^{(2)} \in \mcV.
\end{equation}
This kernel uniquely determines an RKHS $\mathcal{H}_k\coloneqq\bigotimes_{i=1}^p\mathcal{H}_{k_i}$ with $k$ as its reproducing kernel, see \cite{k_generates_RKHS_book_BerlinetAgnan2003}. 
The tensor product $\bigotimes_{i=1}^p\mathcal{H}_{k_i}$ is understood as the completion of the algebraic tensor product endowed with the canonical inner product induced by the factor Hilbert spaces. 

To extend the RKHS framework from deterministic elements to probability measures, we introduce the \emph{kernel mean embedding} (KME), which represents probability distributions as elements of an RKHS; see, e.g., \cite{k_generates_RKHS_book_BerlinetAgnan2003,smola2007_MMDIntro}.

\begin{definition}[Kernel Mean Embedding]
    Let $M^1_+(\mcV)$ be the space of probability measures on $\mcV$.
    The \textit{Kernel Mean Embedding} (KME) of $M^1_+(\mathcal{V})$ into the RKHS $\mathcal{H}_k$ is defined by the mapping
    \begin{align} \label{eqn:kbsa_theory_basicTheory_kernelMeanEmbedding}
         \mu:M^1_+(\mathcal{V}) \rightarrow \mathcal{H}_k, \quad \mathbb{P} \mapsto \mu(\Pprob) = \int_{\mathcal{V}}{k(v,\cdot) \dd\mathbb{P}(v)} = :  \mu_{\mathbb{P}}.
    \end{align}
\end{definition}
The integral is to be interpreted as a Bochner integral, i.e.~an integral for Banach space-valued functions. 
Under the assumption that the kernel $k$ is bounded and continuous, the map $v \mapsto k(v,\cdot)$ is Bochner integrable, and hence the KME is well defined. 
Note that the KME uniquely characterizes a probability measure if $\mu$ is injective. This leads to the notion of \textit{characteristic kernel}.
\begin{definition}[Characteristic Kernel] \label{definition:characteristic_kernels}  
    A kernel $k$ is said to be characteristic if $\mu_{\Pprob} = \mu_{\mathbb{Q}}$ $\Rightarrow$ $\Pprob = \mathbb{Q}$, for all $\Pprob, \mathbb{Q} \in M^1_+(\mathcal{V})$.
\end{definition}
{It is in general difficult to determine if a given kernel is characteristic.
We refer to \cite{Szabo2018Characteristic, Ziegel2024} for overviews.
In addition, the product of characteristic kernels is not always characteristic on the associated product space, see \cite{Szabo2018Characteristic}.}
We summarize the required properties for a given kernel $k$ in the following Assumption.
\begin{assumption}\label{hp:kernel}
     {The kernel $k$ is bounded and continuous.}
\end{assumption}

While the KME represents a single probability distribution as an element of a RKHS, in this work we are interested in characterizing the dependence between two random objects, the model inputs and outputs. To this aim we consider a second probability space $(\mcW, \mathcal{B}_{\mcW}, \Pprob_{\mcW})$ with $\mcW$ a separable {metric} space, and introduce the \emph{cross-covariance operator}, see e.g.~\cite{baker1973CrossCovariance}. 

\begin{definition}[Cross-covariance operator] \label{definition:cross_covariance_operator}
    Let $k_{\mcV}$ and $k_{\mcW}$ be {continuous and bounded} reproducing kernels on the separable {metric} spaces $\mcV$ and $\mcW$, respectively. Let $\mathcal{H}_{k_{\mcV}}$ and $\mathcal{H}_{k_{\mcW}}$ be the associated RKHSs and $\mathcal{H}_{k_{\mcV}}\otimes\mathcal{H}_{k_{\mcW}}$ the RKHS induced by product kernel $k_{\mcV,\mcW}$ over $\mcV\times\mcW$. Let $\mathbb{P}_{\mcV}\in M_+^1(\mcV)$ and $\mathbb{P}_{\mcW}\in M_+^1(\mcW)$ be probability measures on $\mcV$ and $\mcW$, respectively, and $\Pprob_{\mcV,\mcW} \in M_+^1(\mcV \times \mcW)$ a probability measure on $\mcV \times \mcW$. The (centered) cross-covariance operator 
    $\mathcal C_{\mathbb P_{\mcV,\mcW}}: \mathcal H_{k_{\mathcal W}} \rightarrow \mathcal H_{k_{\mathcal V}}$
    is defined by
    \begin{align*}
    \mathcal C_{\mathbb P_{\mcV,\mcW}}
    &=
    \int_{\mathcal V\times\mathcal W}
    \left(k_{\mathcal V}(v,\cdot)-\mu_{\mathbb P_{\mathcal V}}\right)
    \otimes
    \left(k_{\mathcal W}(w,\cdot)-\mu_{\mathbb P_{\mathcal W}}\right)
    \dd \Pprob_{\mcV,\mcW}(v,w)
    \\
    &=
    \int_{\mathcal V\times\mathcal W}
    k_{\mathcal V}(v,\cdot)\otimes
    k_{\mathcal W}(w,\cdot)
    \dd \Pprob_{\mcV,\mcW}(v,w)
    -
    \mu_{\mathbb P_{\mathcal V}}
    \otimes
    \mu_{\mathbb P_{\mathcal W}}.
    \end{align*}
\end{definition}
Under Assumption \ref{hp:kernel} on the kernels {$k_\mcV$ and $k_\mcW$}, the cross-covariance operator {$\mathcal C_{\mathbb P_{\mcV,\mcW}}$} is a Hilbert--Schmidt operator and therefore has finite Hilbert--Schmidt norm; see, e.g., \cite{Gretton2005_CrossCovIntro}. We denote such norm by $\lVert \cdot \rVert_{HS}$.
This leads to the definition of the \emph{Hilbert--Schmidt Independence Criterion}, which quantifies the dependence between the probability measures $\mathbb P_{\mcV,\mcW}$ and $\mathbb P_{\mcV}\otimes\mathbb P_{\mcW}$, see \cite{Gretton2005_CrossCovIntro}.
\begin{definition} [Hilbert-Schmidt Independence Criterion] \label{definition:hsic}
    Let the setup of Definition \ref{definition:cross_covariance_operator} hold. The Hilbert-Schmidt Independence Criterion (HSIC) associated with the joint probability measure $\mathbb P_{\mcV,\mcW}$ is defined by 
    \begin{align} \label{equation:hsic_definition}
        \text{HSIC}(\mathbb P_{\mcV,\mcW}) \coloneqq& \lVert \mathcal{C}_{\mathbb P_{\mcV,\mcW}} \rVert^2_{HS}. 
    \end{align}
\end{definition}

With the characteristic property of the kernels, we present the following important theorem for the sensitivity analysis, see \cite[Theorem 4]{Gretton2005_CrossCovIntro} or \cite[Theorem 3(i)]{Szabo2018Characteristic}.
\begin{theorem} [Inferring Independence from HSIC]\label{thm:HSIC:independence}
    Let the setup of Definition \ref{definition:cross_covariance_operator} hold.
    {Further, let the kernels $k_\mcV$ and $k_\mcW$ be characteristic.}
    Then, 
    \begin{equation}
        \text{HSIC}(\Pprob_{\mcV,\mcW})=0 \iff \mathbb{P}_{\mcV,\mcW}=\mathbb{P}_{\mcV}\otimes\mathbb{P}_{\mcW}. 
    \end{equation}
\end{theorem}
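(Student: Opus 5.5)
The direction "$\Leftarrow$" is immediate. If $\Pprob_{\mcV,\mcW}=\Pprob_\mcV\otimes\Pprob_\mcW$, then by Fubini for Bochner integrals (admissible since the kernels are bounded) the first term of $\mathcal C_{\Pprob_{\mcV,\mcW}}$ in Definition~\ref{definition:cross_covariance_operator} factorizes:
\[
\int_{\mcV\times\mcW} k_\mcV(v,\cdot)\otimes k_\mcW(w,\cdot)\dd(\Pprob_\mcV\otimes\Pprob_\mcW)(v,w)=\mu_{\Pprob_\mcV}\otimes\mu_{\Pprob_\mcW}.
\]
Hence $\mathcal C_{\Pprob_{\mcV,\mcW}}=0$ and HSIC vanishes.

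For "$\Rightarrow$", the plan is to recognize the cross-covariance operator as a difference of kernel mean embeddings with respect to the product kernel $k_{\mcV,\mcW}=k_\mcV k_\mcW$ from \eqref{eq:product_kernel}. We identify $\mathcal H_{k_\mcV}\otimes\mathcal H_{k_\mcW}$ isometrically with the Hilbert--Schmidt operators $\mathcal H_{k_\mcW}\to\mathcal H_{k_\mcV}$. Under this identification, $k_\mcV(v,\cdot)\otimes k_\mcW(w,\cdot)$ corresponds to $k_{\mcV,\mcW}((v,w),\cdot)$, so
\[
\mathcal C_{\Pprob_{\mcV,\mcW}}=\mu(\Pprob_{\mcV,\mcW})-\mu(\Pprob_\mcV\otimes\Pprob_\mcW),
\]
where $\mu$ is the KME for $k_{\mcV,\mcW}$. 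The second term here again uses Fubini. Thus
\[
\text{HSIC}(\Pprob_{\mcV,\mcW})=\|\mu(\Pprob_{\mcV,\mcW})-\mu(\Pprob_\mcV\otimes\Pprob_\mcW)\|^2_{\mathcal H_{k_\mcV}\otimes\mathcal H_{k_\mcW}},
\]
which is the squared MMD between the joint law and the product of the marginals. If HSIC vanishes, the two embeddings coincide. Provided the product kernel is characteristic on the relevant class of measures, Definition~\ref{definition:characteristic_kernels} then gives $\Pprob_{\mcV,\mcW}=\Pprob_\mcV\otimes\Pprob_\mcW$.

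The main obstacle is exactly this step. As remarked after Definition~\ref{definition:characteristic_kernels}, a product of characteristic kernels need not be characteristic on all of $M^1_+(\mcV\times\mcW)$. However, we only need injectivity of the embedding when comparing a joint measure with the product of its own marginals, which is the content of \cite[Theorem 3(i)]{Szabo2018Characteristic}. To show it directly, I would proceed as follows.

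From $\mathcal C_{\Pprob_{\mcV,\mcW}}=0$ we get $\operatorname{Cov}(f(V),g(W))=0$ for all $f\in\mathcal H_{k_\mcV}$ and $g\in\mathcal H_{k_\mcW}$, via $\langle f,\mathcal C g\rangle=\operatorname{Cov}(f(V),g(W))$.

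Fix $g$ and define the finite signed measure $\nu_g(B)=\mathbb E[(g(W)-\mathbb E g(W))\mathbbm 1_B(V)]$ on $\mcV$. It has total mass zero, and its embedding satisfies $\langle f,\mu_{\nu_g}\rangle=0$ for all $f$, so $\mu_{\nu_g}=0$. Since $k_\mcV$ is characteristic, the mean embedding is injective on differences of probability measures. Writing $\nu_g=c(\Pprob_1-\Pprob_2)$ via its Jordan decomposition, we conclude $\nu_g=0$. This gives $\operatorname{Cov}(\mathbbm 1_B(V),g(W))=0$ for all Borel sets $B$ and all $g$.

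Repeating the argument in the $\mcW$ variable, now with the signed measure $C\mapsto\operatorname{Cov}(\mathbbm 1_B(V),\mathbbm 1_C(W))$ and the characteristic kernel $k_\mcW$, yields $\Pprob(V\in B,W\in C)=\Pprob_\mcV(B)\Pprob_\mcW(C)$. Rectangles form a $\pi$-system generating the Borel $\sigma$-algebra of the separable product space, so the two measures agree.
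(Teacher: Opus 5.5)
Your proof is correct. The paper gives no argument of its own for this statement; it only defers to \cite[Theorem 4]{Gretton2005_CrossCovIntro} or \cite[Theorem 3(i)]{Szabo2018Characteristic}. The first of these is formulated for universal kernels on compact domains, which is more than the statement assumes. The operative reference is therefore the second, and your two-step signed-measure argument is essentially its proof:
\begin{itemize}
\item $\mathcal C_{\Pprob_{\mcV,\mcW}}=0$ gives $\operatorname{Cov}(f(V),g(W))=0$ for all $f$ and $g$;
\item the characteristic property is applied one factor at a time, first for $k_\mcV$ and then for $k_\mcW$, each time upgraded by the Jordan decomposition to injectivity on finite signed measures of total mass zero;
\item the $\pi$-system of rectangles then finishes the proof.
\end{itemize}

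Compared with the bare citation, your version shows where each hypothesis enters. Boundedness and continuity of the kernels make every RKHS function bounded and Borel. Hence $\nu_g$ is a genuine finite signed measure with $\int f\dd\nu_g=\operatorname{Cov}(f(V),g(W))$. Separability gives $\mathcal B(\mcV\times\mcW)=\mathcal B(\mcV)\otimes\mathcal B(\mcW)$, so agreement on rectangles suffices. Finally, the product kernel never has to be characteristic, which is how you sidestep the caveat after Definition~\ref{definition:characteristic_kernels}. For the same reason your MMD paragraph, while correct, is not used and can be reduced to a remark.

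One assumption you share implicitly with the paper should be stated: $\Pprob_\mcV$ and $\Pprob_\mcW$ must be the marginals of $\Pprob_{\mcV,\mcW}$. Otherwise neither the second expression for $\mathcal C_{\Pprob_{\mcV,\mcW}}$ in Definition~\ref{definition:cross_covariance_operator} nor your identity $\langle f,\mathcal C g\rangle=\operatorname{Cov}(f(V),g(W))$ holds.
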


\subsection{Kernel-based Sensitivity Index} \label{subsection:kernel_based_sensitivity_index}

The RKHS framework recalled in the previous section, and in particular the HSIC, can be used to define kernel-based sensitivity indices, see \cite{daVeiga2021kernel} {and \cite[Chap.~6]{SA_book}}. 
While the HSIC was introduced above in terms of a generic joint probability measure, we now specify it to the setting in Section \ref{section:problem_description} and consider the model inputs and output of the map $\eta$ in \eqref{eta} directly. Accordingly, for input variables $\UU_A$ and the set-valued output $\Gamma$, we use the shorthand
\[
    \text{HSIC}(\UU_A,\Gamma) := \text{HSIC}(\Pprob_{\UU_A,\Gamma}).
\]

In analogy with classical Sobol' sensitivity indices, which admit an ANOVA decomposition of the variance, one can develop an ANOVA-like decomposition for HSIC. 
The key building blocks for this construction are input kernels (that is, kernels on $\mcU \times \mcU$) of ANOVA type, see \cite{durrande2013anova,ginsbourger2016anova} {and Mercer's Theorem for the output kernel, see e.g. \cite[Theorem 2.1]{Muandet2017}.} 
This enables the separation of contributions associated with different subsets of input variables.
{Note that the output kernel on $\mathcal{L}^\ast(\mcX) \times \mathcal{L}^\ast(\mcX) $ is not required to be an ANOVA kernel.}
\begin{definition}[ANOVA kernel]
\label{definition:anova_kernels}
    A kernel $k_i:\mcU_i\times\mcU_i\rightarrow\mathbb{R}$, $i = 1,\ldots, n_U$  is called an ANOVA kernel with respect to $\Pprob_{U_i}$ if it can be decomposed as
    \[
        k_i = 1+\bar{k}_i,
    \]
    where $\bar{k}_i:\mcU_i\times\mcU_i\rightarrow\mathbb{R}$ is centered with respect to $\Pprob_{U_i}$, i.e.,
    \begin{equation}\label{anova:centered}
        \int_{\mcU_i} \bar{k}_i(u_i,u_i') \dd\Pprob_{U_i}(u_i) =0, \qquad \forall u_i'\in\mcU_i.
    \end{equation}
    The corresponding ANOVA kernel on $\mcU=\bigtimes_{i=1}^{n_U}\mcU_i$ is defined as the product kernel
    \begin{equation} \label{equation:ANOVA_kernel_definition}
        k_{\mathrm{ANOVA}}(\uu,\uu') = \prod_{i=1}^{n_U} \left(1+\bar{k}_i(u_i,u_i')\right), \qquad \uu,\uu'\in\mcU.
    \end{equation}
\end{definition}
Note that expanding the product in \eqref{equation:ANOVA_kernel_definition} yields
\begin{equation}\label{anova:plus one}
    k_{\mathrm{ANOVA}}(\bm{u},\bm{u}') = 1+ \sum_{\emptyset\neq A\subseteq \{1,\ldots,n_U\}} \prod_{i\in A}\bar{k}_i(u_i,u_i').
\end{equation}
Hence, the kernel decomposes into components associated with all subsets of the input variables.
Provided that the input random variables $U_i$, $i =1,\ldots, n_U$ are mutually independent as assumed in Section \ref{section:problem_description}, this structure induces a corresponding ANOVA decomposition of the HSIC, see \cite{daVeiga2021kernel} {and \cite[Chap.~6]{SA_book}}. 
In the presence of dependent inputs, we refer to \cite{daVeiga2021kernel} for kernel-based Shapley effects and to \cite{2026TotalHSICwithDependentInputs} for HSIC indices.

\begin{proposition} [ANOVA decomposition of HSIC; see {{\cite[Theorem 6.7]{SA_book}} and \cite{daVeiga2021kernel}}] \label{proposition:hsic_decomposition}
    For each $A\in\mathcal{P}_{n_U}$, define $\mcU_A:=\bigtimes_{\ell \in A} \mcU_\ell$ and let the reproducing kernel on $\mcU_A \times \mcU_A$ be given as $k_A\coloneqq\prod_{i\in A}k_i$. Let $k_{\Gamma}$ be the reproducing kernel of the output defined on $\mathcal{L}^*(\mcX)\times\mathcal{L}^*(\mcX)$. Let $k_A$ and $k_{\Gamma}$ satisfy Assumption \ref{hp:kernel}. Moreover, let $k_A$ be an ANOVA kernel 
    {and let the measure $\Pprob_\Gamma$ satisfy Assumption~\ref{ass:set}.}
    Then, the ANOVA decomposition of the HSIC is given by
    \begin{align}
        \text{HSIC}(\UU,\Gamma) = \sum_{A\in\mcP_{n_U}\setminus\{ \emptyset \}}\text{HSIC}(\UU_A,\Gamma), \label{equation:hsic_decomposition_full} 
    \end{align}
    where 
    \begin{equation}
        \text{HSIC}(\UU_A,\Gamma) =\sum_{B\subseteq{A}}(-1)^{|A|-|B|}\text{HSIC}(\UU_B,\Gamma). \label{equation:hsic_decomposition_subset_A}
    \end{equation}
\end{proposition}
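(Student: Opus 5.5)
The plan is to reduce everything to the expectation form of the HSIC and then to exploit the linearity of that form in the input kernel. In this form the ANOVA structure of $k_A$ splits the HSIC into one contribution per subset of inputs. For \eqref{equation:hsic_decomposition_subset_A}, I read the left-hand side as the ANOVA \emph{component} attached to $A$, as in \cite{daVeiga2021kernel}. The right-hand side contains the plain HSIC values computed with the kernels $k_B$. The identity then follows by Möbius inversion once a ``forward'' formula is established.

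\textbf{Step 1 (expectation form).} Let $(\UU,\Gamma)$, $(\UU',\Gamma')$ and $(\UU'',\Gamma'')$ be independent copies drawn from $\Pprob_{\UU,\Gamma}$. Expanding $\lVert \mathcal{C}\rVert_{HS}^2$ with the reproducing property, and using Assumption~\ref{hp:kernel} to justify Bochner integrability and Fubini, gives the standard identity
\begin{align*}
\text{HSIC}(\UU_B,\Gamma)={}&\ev{k_B(\UU_B,\UU_B')k_\Gamma(\Gamma,\Gamma')}-2\,\ev{k_B(\UU_B,\UU_B')k_\Gamma(\Gamma,\Gamma'')}\\
&+\ev{k_B(\UU_B,\UU_B')}\ev{k_\Gamma(\Gamma,\Gamma')}.
\end{align*}
The right-hand side is linear in the input kernel.

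\textbf{Step 2 (insert the ANOVA expansion).} By \eqref{anova:plus one} restricted to $B$, we have $k_B=\sum_{C\subseteq B}\bar k_C$, where $\bar k_C:=\prod_{i\in C}\bar k_i$ and $\bar k_\emptyset=1$. For $C=\emptyset$ the three terms cancel exactly, so the constant part carries no dependence. For $C\neq\emptyset$, the two last terms vanish. In the second term, $\UU'$ is independent of $(\UU,\Gamma,\Gamma'')$, so we may integrate $\bar k_C(\UU_C,\UU'_C)$ first in $\UU'_C$. Independence of the components of $\UU$ and the centering \eqref{anova:centered} of any single factor $i\in C$ then give zero. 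The third term vanishes by the same argument. Hence
\[
\text{HSIC}(\UU_B,\Gamma)=\sum_{\emptyset\neq C\subseteq B} S_C,\qquad S_C:=\ev{\bar k_C(\UU_C,\UU_C')\,k_\Gamma(\Gamma,\Gamma')}.
\]

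\textbf{Step 3 (inversion and conclusion).} Taking $B=1{:}n_U$ gives \eqref{equation:hsic_decomposition_full} with components $S_A$. Applying Möbius inversion on the Boolean lattice $\mcP_{n_U}$ to the forward formula of Step~2 gives $S_A=\sum_{B\subseteq A}(-1)^{|A|-|B|}\text{HSIC}(\UU_B,\Gamma)$, which is \eqref{equation:hsic_decomposition_subset_A}.

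\textbf{Main obstacle.} The delicate point is to identify $S_C$ as a genuine HSIC-type quantity, namely a squared Hilbert--Schmidt norm that is nonnegative, rather than a mere algebraic term. This is where Assumption~\ref{ass:set} enters. Since $\mathcal{L}^\ast_c(\mcX)$ is compact and $k_\Gamma$ is continuous, Mercer's theorem gives $k_\Gamma(\Gamma,\Gamma')=\sum_j\lambda_j\varphi_j(\Gamma)\varphi_j(\Gamma')$ on the support of $\Pprob_\Gamma$, with $\lambda_j\ge0$ and uniform convergence. Inserting this expansion and exchanging sum and expectation, which the uniform convergence and boundedness allow, yields
\[
S_C=\sum_j\lambda_j\,\big\lVert \ev{\bar k_C(\UU_C,\cdot)\varphi_j(\Gamma)}\big\rVert^2_{\mathcal H_{\bar k_C}}\ge0.
\]
This step requires $\bar k_C$ to be positive semidefinite. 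That holds because each $\bar k_i$ is the kernel of the orthogonal complement of the constants in $\mathcal H_{k_i}$, which is a property of ANOVA kernels, and products of positive semidefinite kernels remain positive semidefinite. I would verify these two facts first.
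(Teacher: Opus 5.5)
Your Steps 1--3 are already a complete and correct proof, and they take a different route from the one the paper relies on. The paper gives no proof of its own. It defers to \cite[Theorem 6.7]{SA_book} and \cite{daVeiga2021kernel}, and frames the decomposition around ANOVA input kernels plus Mercer's expansion of the output kernel, which is why it imposes Assumption~\ref{ass:set}. Your argument never needs Mercer or Assumption~\ref{ass:set}. Boundedness of the kernels, independence of the $U_i$ and the centering \eqref{anova:centered} give $\text{HSIC}(\UU_B,\Gamma)=\ev{(k_B(\UU_B,\UU_B')-1)\,k_\Gamma(\Gamma,\Gamma')}=\sum_{\emptyset\neq C\subseteq B}S_C$ for every $B$, and M\"obius inversion finishes. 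This is the same computation the paper carries out in Subsection~\ref{section:HSIC_estimation} for its estimator, run over all subsets and then inverted. Reading the left-hand side of \eqref{equation:hsic_decomposition_subset_A} as the ANOVA component is the only consistent reading, since taken literally that identity is circular.

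Your ``main obstacle'' is therefore not part of the statement, which asserts no nonnegativity of the components. If you want $S_C\ge 0$ anyway, Mercer is still unnecessary. Your claim that $\bar k_i$ is positive semidefinite is right: the mean embedding of $\Pprob_{U_i}$ in $\mathcal{H}_{k_i}$ is the constant $1$, and $\langle f,1\rangle_{\mathcal{H}_{k_i}}=\ev{f(U_i)}$, so the orthogonal complement of the constants has kernel $k_i-1$. Hence $\bar k_C$ is a bounded continuous kernel with zero mean embedding, and
\[
S_C=\Bigl\lVert \ev{\bar k_C(\UU_C,\cdot)\otimes k_\Gamma(\Gamma,\cdot)}\Bigr\rVert^2_{\mathcal{H}_{\bar k_C}\otimes\mathcal{H}_{k_\Gamma}}\ge 0 .
\]
That is, $S_C$ is itself the HSIC computed with input kernel $\bar k_C$.

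What each route buys: yours gives the decomposition, and nonnegativity, under Assumption~\ref{hp:kernel} alone. The Mercer route additionally gives the spectral representation $S_C=\sum_j\lambda_j\lVert\ev{\bar k_C(\UU_C,\cdot)\varphi_j(\Gamma)}\rVert^2$ in terms of the output features $\varphi_j(\Gamma)$, but only at the price of the compactness assumption on the support of $\Pprob_\Gamma$.
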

The ANOVA structure of the input kernel leads to simplifications in estimating both $\text{HSIC}(\UU_A,\Gamma)$ in \eqref{equation:hsic_decomposition_subset_A} and $\text{HSIC}(\UU,\Gamma)$ in \eqref{equation:hsic_decomposition_full}, see Subsection~\ref{section:HSIC_estimation}.
We conclude this subsection with the presentation of the kernel-based HSIC-ANOVA sensitivity indices; see \cite{daVeiga2021kernel} {and \cite[Chap.~6]{SA_book}}.
\begin{definition} [HSIC-ANOVA Sensitivity Index] \label{definition:hsic_anova_index}
    Let the setup of Proposition \ref{proposition:hsic_decomposition} hold. The HSIC-ANOVA Sensitivity Index associated to a subset $A \in \mcP_{n_U}$ of the random input variables, is defined as
    \begin{equation}\label{equation:hsic_anova_index}
        S_A^{HSIC}\coloneqq\frac{\text{HSIC}(\UU_A,\Gamma)}{\text{HSIC}(\UU,\Gamma)}. 
    \end{equation} 
    Moreover, the following identity holds
    \begin{equation}
        \sum_{A\in\mathcal{P}_{n_U}\setminus\{\emptyset\}}S^{HSIC}_A=1,
    \end{equation}
    where we set $S_A^{HSIC}=0$ for $A=\emptyset$.
\end{definition}
In this paper, we study first order (main effect) HSIC-ANOVA indices. That is, we focus on $S_A^{HSIC}$, where $|A|=1$.
{Some remarks on the HSIC-ANOVA indices are in order. First, Theorem~\ref{thm:HSIC:independence} tells us that, provided both the input kernel $k_i$ associated with $U_i$ and the output kernel $k_\Gamma$ are characteristic, we can use the HSIC for the \textit{screening} of input variables: If $\text{HSIC}(U_i,\Gamma)=0$, then the random variable $U_i$ is statistically independent of the output $\Gamma(\UU)$. 
Second, the ANOVA decomposition of the HSIC in Proposition~\ref{proposition:hsic_decomposition} allows us to properly normalize the HSIC indices. 
Thus they can be used for the \textit{ranking} of the input variables: 
the first order index $S_i^\text{HSIC}$ tells us how much of $\text{HSIC}(\UU,\Gamma)$ is explained by the random input $U_i$ alone, analogously to the main effect Sobol' index for real-valued outputs.} 

\section{Spatially-integrated Sobol' sensitivity indices}\label{section:SpIn}

In this section we perform sensitivity analysis of indicator functions of spatial sets. 
We employ the framework of generalized Sobol' indices for functional outputs defined in \cite{gamboa_SA_for_multiDim_and_funtional_outputs}.
An analogous framework for time-dependent functional outputs was developed in \cite{timeDependent_VBSA}.
We recall in Section~\ref{section:SpIn:prelim} the theoretical foundations of function space valued random variables, their covariance operator and a trace formula which is crucial for the calculation of the generalized Sobol' indices.
In Section~\ref{section:SpIn:definition} we recall the definition of the generalized Sobol' index and present expressions for the indices which are practically computable.

\subsection{Mathematical Preliminaries}\label{section:SpIn:prelim}
Let $\mcV$ be a separable Hilbert space with inner product $\langle \cdot,\cdot\rangle_\mcV$ and induced norm $\Vert \cdot \Vert_\mcV$.
Let $(\Omega, \mathcal{F},\mathbb{P})$ be a probability space.
Then $L^2(\Omega; \mcV)$ is the Bochner space of all $\mcV$-valued, strongly $\mathcal{F}$-$\mathcal{B}(\mcV)$-measurable random variables $Z\colon \Omega \rightarrow \mcV$, such that $\ev{\Vert Z\Vert^2_\mcV}<\infty$, where $\mathcal{B}(\mcV)$ is the Borel-$\sigma$ algebra generated by all open subsets in $\mcV$.
$L^2(\Omega;\mcV)$ is a Hilbert space with inner product $\ev{\langle \cdot, \cdot\rangle_\mcV}$ and norm $\left(\ev{\Vert \cdot \Vert^2_\mcV}\right)^{1/2}$.
Elements of $L^2(\Omega; \mcV)$ are called mean-square (m.-s.) integrable random variables. \cite[Chap.~4]{introToComputationalStochasticPDEs}

Let $Z \in L^2(\Omega; \mcV)$.
The mean of $Z$ is the unique element $\mu_Z \in \mcV$, such that $\ev{\langle v, Z \rangle_\mcV}=\langle v, \mu_Z\rangle_\mcV$ for all $v \in \mcV$.
The covariance operator $\mcC_Z$ associated with $Z$ is defined as $\mcC_Z \colon \mcV \rightarrow \mcV$, such that $\langle \mcC_Z v, w\rangle_\mcV = \cov{\langle Z, v\rangle_\mcV}{\langle Z, w\rangle_\mcV}$ for all $v,w \in \mcV$.
By definition it holds $\ev{\Vert \mu_Z \Vert^2_\mcV}=\Vert \mu_Z \Vert^2_\mcV<\infty$, thus $\mu_Z \in L^2(\Omega; \mcV)$.
Moreover, $\mcC_Z$ is a self-adjoint, positive semi-definite, trace-class operator with trace $\tr{\mcC_Z}=\ev{\Vert Z-\mu_Z\Vert^2_\mcV}$, see e.g. \cite[Lemma 4.37]{introToComputationalStochasticPDEs}.

We are interested in $\mcV=L^2(\mcX)$.
In this case $Z(\xx)$ is a real-valued random variable for each $\xx \in \mcX$, and the mean function $\mu_Z(\xx)=\ev{Z(\xx)}$ and covariance function $c_Z(\xx,\xx'):=\cov{Z(\xx)}{Z(\xx')}$ are well defined a.s. in $\mcX$, see \cite[Chap.~7]{introToComputationalStochasticPDEs}.
Moreover, if $c_Z \in L^2(\mcX \times \mcX)$, then for $v,w \in L^2(\mcX)$, from the definition of $\mcC_Z$ and Fubini's Theorem we obtain
\begin{equation*}
    \begin{split}
    \langle \mcC_Z v, w\rangle_{L^2(\mcX)}
    &=
    \ev{\langle Z-\mu_Z, v \rangle_{L^2(\mcX)}\langle Z-\mu_Z, w \rangle_{L^2(\mcX)}}\\
    &=
    \ev{\int_\mcX (Z(\xx)-\mu_Z(\xx)) v(\xx)\dd \xx \int_\mcX (Z(\xx')-\mu_Z(\xx')) w(\xx')\dd \xx' }\\
    &=
    \int_\mcX \int_\mcX \cov{Z(\xx)}{Z(\xx')} v(\xx) w(\xx') \dd \xx \dd \xx',
    \end{split}
\end{equation*}
so that the covariance operator takes the form of the integral operator
\begin{equation}\label{covariance integral operator}
    (\mcC_Z v)(\xx)
    = \int_\mcX \cov{Z(\xx)}{Z(\xx')} v(\xx') \dd \xx',\qquad v \in L^2(\mcX), \quad v,x \in \mcL^2(\mcX),
\end{equation}
with kernel function $c_Z$.
For the trace of $\mcC_Z$, using Tonelli's Theorem, we obtain
\begin{equation}\label{trace is integrated variance}
    \begin{split}
    \tr{\mcC_Z}
    &=
    \ev{\Vert Z-\mu_Z\Vert^2_{L^2(\mcX)}}
    =
    \ev{\int_\mcX (Z(\xx)-\mu_Z(\xx))^2\dd \xx}\\
    &=
    \int_\mcX \ev{(Z(\xx)-\mu_Z(\xx))^2}\dd \xx
    =
    \int_\mcX \var Z(\xx)\dd \xx.
    \end{split}
\end{equation}
The trace formula in \eqref{trace is integrated variance} is the key in the evaluation of the generalized Sobol' indices.
We remark that \cite[Prop.~2.1]{timeDependent_VBSA} proves an analogous result for time-dependent processes under the assumption that the covariance function $c_Z$ is continuous on $\mcX \times \mcX$.
In our case $Z$ is an indicator random field.
$c_Z(\xx,\xx')$ can be written as sum of products of probabilities of certain events.
Thus, $c_Z$ is bounded for all $\xx, \xx' \in \mcX$ and is (square) integrable on the bounded domain $\mcX \times \mcX$, however, $c_Z$ is not necessarily continuous.

A further remark is in order. The covariance operator of a real-valued random field $Z$ is often constructed as the integral operator in \eqref{covariance integral operator} with kernel $c_Z$, see e.g. \cite{Ghanem1991, introToComputationalStochasticPDEs}.
If the kernel $c_Z \in L^2(\mcX \times \mcX)$, then the integral operator $\mcC_Z$ in \eqref{covariance integral operator} is a Hilbert--Schmidt operator on $L^2(\mcX)$ \cite[Thm.~1.65]{introToComputationalStochasticPDEs}.
However, a Hilbert--Schmidt operator is not necessarily trace class.
A classical argument to show $\tr \mcC_Z < \infty$ along with the trace formula in \eqref{trace is integrated variance} is by Mercer's Theorem under the assumption that the covariance function $c_Z$ is continuous, see e.g. \cite{timeDependent_VBSA}. 
In contrast, the definition of $\mcC_Z$ as covariance operator of a function space valued random variable $Z\in L^2(\Omega; \mcV)$ establishes $\tr \mcC_Z =\ev{\Vert Z - \mu_Z\Vert^2_\mcV} <\infty$ from elementary definitions. 
In this setting, the measurability of the map $\theta$ in \eqref{theta} is an important assumption, which must be established on a case-by-case basis. This requires further assumptions on the constraint set $\mcA$, the observation window $\mcX_\text{obs}$, and the process model $G$ in \eqref{equation:G} which are out of the scope of this work.

\subsection{Spatially-integrated Sobol' Index}\label{section:SpIn:definition}
We return to the setting of the problem description in Section~\ref{section:problem_description}.
The generalized Sobol' indices in \cite{gamboa_SA_for_multiDim_and_funtional_outputs} are defined in terms of the trace of covariance operators linked to the indicator random field $Z:=\mathds{1}_{\Gamma}$.
We define the necessary concepts.
Recall that $\UU$ is a $n_{U}$-valued random vector and $\Gamma=\Gamma(\UU)$ is a random set with realizations in $\mcX \subset \mathbb{R}^{n_{\xx}}$.
Note that by assumption $\lambda(\mcX)<\infty$, thus the Lebesgue measure of all realizations of $\Gamma$ is bounded.
Moreover, the realizations of $Z$ are in $L^2(\mcX)$ $\Pprob$-a.s. and $Z \in L^2(\Omega;L^2(\mcX))$.
Indeed, for $\omega \in \Omega$ we have
\[
    \Vert Z(\cdot,\omega) \Vert^2_{L^2(\mcX)}
    =
    \int_\mcX \mathds{1}^2_{\Gamma}(\xx,\omega)\dd \xx
    =
    \lambda(\Gamma(\UU(\omega))<\infty
\]
and thus $\Vert Z \Vert^2_{L^2(\Omega;L^2(\mcX))}=\ev{\Vert Z(\cdot,\omega)\Vert^2_{L^2(\mcX)}}<\infty$.

Next we recall an ANOVA-like decomposition of $Z=\mathds{1}_\Gamma$, see \cite{timeDependent_VBSA, gamboa_SA_for_multiDim_and_funtional_outputs} for general m.-s. integrable random variables.
The mean function of $Z$ is $Z_0(\xx):=\ev{\mathds{1}_{\{G(\xx,\bm{U})\in \mcA\}}}$  $\forall \xx\in\mcX$. 
This gives
\begin{equation}
    Z_0(\xx)
    =
    \Pprob(G(\xx,\UU) \in \mcA), \quad \xx \in \mcX,
\end{equation}
thus $Z_0(\xx) \in [0,1]$ is bounded on $\mcX$.
Now fix an index set $A \in \mathcal{P}_{n_U}$, and define the random vector $\UU_A$ with components $U_j$, $j \in A$.
The components of $\UU_A$ contain the inputs for which we perform a sensitivity analysis.

Let $L^2(\Omega, \mathcal{F}; L^2(\mcX))\equiv L^2(\Omega; L^2(\mcX))$ denote the Bochner space of m.-s. integrable, $L^2(\mcX)$-valued and $\mathcal{F}$-measurable random variables.
Let $L^2(\Omega, \sigma(\UU_A); L^2(\mcX))$ denote the Bochner space of m.-s. integrable, $L^2(\mcX)$-valued and $\sigma(\UU_A)$-measurable random variables.
In other words, random variables in $L^2(\Omega, \sigma(\UU_A); L^2(\mcX))$ depend only on the components $U_j$, $j \in A$ of the random input vector $\UU$.
Then the symbol $\mycev{Z}{\UU_A}$ denotes the conditional expectation of $Z$ with respect to $\UU_A$, to be understood as the orthogonal projection of $Z$ onto $L^2(\Omega, \sigma(\UU_A); L^2(\mcX))$, see \cite[Def.~4.47]{introToComputationalStochasticPDEs}.
This allows us to write
\begin{equation}\label{anova-like}
    \mathds{1}_\Gamma(\xx,\omega)
    =
    Z_0(\xx)+Z_A(\xx,\omega)+Z_{A^c}(\xx,\omega)+Z_{A,A^c}(\xx,\omega), \quad \xx \in \mcX,
\end{equation}
where 
\begin{align}
    Z_A(\xx, \omega) &\coloneqq \mycev{\mathds{1}_{\{G(\xx,\UU(\omega)) \in \mcA\}}}{\UU_A}-Z_0(\xx), \label{Z_A}\\
    Z_{A^c}(\xx, \omega) &\coloneqq \mycev{\mathds{1}_{\{G(\xx,\UU(\omega)) \in \mcA\}}}{\UU_{A^c}}-Z_0(\xx), \\
    Z_{A,A^c}(\xx, \omega) &\coloneqq Z(\xx,\omega)-Z_A(\xx, \omega)-Z_{A^c}(\xx, \omega)-Z_0(\xx).
\end{align}
In particular, we have
\[
    Z_A(\xx,\omega)
    =
    \Pprob(G(\xx,\UU(\omega)) \in \mcA \,\vert\, \UU_A)-\Pprob(G(\xx,\UU) \in \mcA), \quad \xx \in \mcX.
\]
To define the generalized Sobol' index as in \cite{gamboa_SA_for_multiDim_and_funtional_outputs} it remains to show that $Z_A\in L^2(\Omega; L^2(\mcX))$.
The argument for $Z_{A^{c}}$ is analogous.
For $\omega \in \Omega$ we have
\begin{equation*}
    \begin{split}
    \Vert Z_A(\cdot, \omega) \Vert^2_{L^2(\mcX)}
    &=
    \int_\mcX (\mycev{Z(\cdot,\omega)}{\UU_A}-Z_0(\xx))^2 \dd \xx\\
    &\leq 
    \int_\mcX Z_0^2(\xx)\dd \xx 
    +
    \int_\mcX \mycev{Z(\cdot,\omega)}{\UU_A}^2 \dd \xx<\infty,
    \end{split}
\end{equation*}
as both $Z_0$ and $\mycev{Z(\cdot,\omega)}{\UU_A}$, being (conditional) probabilities, are bounded on $\mcX$.
Thus, $\Vert Z_A\Vert^2_{L^2(\Omega; L^2(\mcX))}=\ev{\Vert Z_A(\cdot, \omega)\Vert^2_{L^2(\mcX)}}<\infty$.
In summary, $Z=\mathds{1}_{\Gamma}$ and $Z_A=\mycev{1_\Gamma}{\UU_A}-Z_0$ have trace-class covariance operators $\mcC_Z$ and $\mcC_{Z_A}$.
Then, following Gamboa et al. \cite{gamboa_SA_for_multiDim_and_funtional_outputs} and analogous to Alexanderian et al. \cite{timeDependent_VBSA}, the generalized Sobol' index $S_A$ and the generalized total Sobol' index $S_A^t$ are defined as 
\begin{equation}\label{SA}
    S_A
    :=
    \frac{\tr\mcC_{Z_A}}{\tr \mcC_Z}, \quad A \in \mathcal{P}_{n_U}, \quad \tr \mcC_Z \neq 0,
\end{equation}
and
\begin{equation}\label{StA}
    S^t_A
    :=
    \frac{\tr \mcC_{Z_A}+\tr \mcC_{Z_{A,A^c}}}{\tr \mcC_Z}, \quad A \in \mathcal{P}_{n_U}, \quad \tr \mcC_Z \neq 0.
\end{equation}
Note that the decomposition in \eqref{anova-like} implies $\tr \mcC_Z = \tr \mcC_{Z_A}+\tr \mcC_{Z_{A^c}}+\tr \mcC_{Z_{A,A^c}}$, thus $\tr \mcC_{Z_A}+\tr \mcC_{Z_{A,A^c}}=\tr \mcC_Z - \tr \mcC_{Z_{A^c}}$ and 
\begin{equation}\label{StA:alternative}
    S^t_A = 1-\frac{\tr\mcC_{Z_{A^c}}}{\tr \mcC_Z}.
\end{equation}
While the expressions in \eqref{SA} and \eqref{StA} are well-defined, the trace formula in \eqref{trace is integrated variance} gives a practically computable expression for $S_A$ and $S_A^t$.
We call these expressions spatially-integrated (SpIn) Sobol' indices.

\begin{proposition}[Spatially-integrated Sobol' index] \label{proposition:spatially_integrated_sobol_index}
   Let $A \in \mathcal{P}_{n_U}$. 
   Consider the indicator random field $Z=\mathds{1}_\Gamma$ defined in Section~\ref{section:problem_description} and the decomposition in \eqref{anova-like}.
   Let $Z, Z_A, Z_{A^c} \in L^2(\Omega; L^2(\mcX))$.
   Then the spatially-integrated (SpIn) Sobol' index associated with the index set $A$ given as
    \begin{equation}\label{SA:spin}
        S^{\text{spin}}_A
        :=
        \frac{\int_\mcX \var Z_A(\xx) \dd \xx}{\int_\mcX \var Z(\xx) \dd \xx}
        = S_A,
    \end{equation}
    and the total spatially-integrated Sobol' index associated with $A$ given as
    \begin{equation}\label{StA:spin}
        S^{\text{spin},t}_A
        :=
        {1-}\frac{\int_\mcX \var Z_{A^c}(\xx) \dd \xx}{\int_\mcX \var Z(\xx) \dd \xx}
        =S_A^t.
    \end{equation}
\end{proposition}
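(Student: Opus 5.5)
The plan is to apply the trace formula \eqref{trace is integrated variance} separately to the numerator and the denominator of \eqref{SA} and \eqref{StA:alternative}.

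\textbf{Step 1 (applicability of the trace formula).} For $W \in \{Z, Z_A, Z_{A^c}\}$ we have $W \in L^2(\Omega; L^2(\mcX))$ by hypothesis. The derivation of \eqref{trace is integrated variance} only used $\tr \mcC_W = \ev{\Vert W-\mu_W\Vert^2_{L^2(\mcX)}}$, which holds for every element of the Bochner space. It then used Tonelli's Theorem, which is legitimate because the integrand $(W(\xx,\omega)-\mu_W(\xx))^2$ is nonnegative and jointly measurable. To obtain joint measurability I would pick a jointly measurable version of $W$. For $Z$ this is immediate from the measurability of $G$ in \eqref{equation:G}. For $Z_A$ and $Z_{A^c}$, the conditional expectation in the Bochner sense can be realised pointwise in $\xx$ through a regular version. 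Since $\UU$ has independent components, a natural candidate is
\[
\mycev{Z}{\UU_A}(\xx,\omega)=\int \mathds{1}_{\{G(\xx,(\UU_A(\omega),\uu_{A^c}))\in\mcA\}}\dd\Pprob_{\UU_{A^c}}(\uu_{A^c}),
\]
which is jointly measurable by Fubini's Theorem. I expect this identification of the Bochner conditional expectation with the pointwise one to be the main technical point. I would verify it by checking that this candidate is $\sigma(\UU_A)$-measurable and satisfies the projection property $\ev{\langle Z-\cdot,V\rangle_{L^2(\mcX)}}=0$ for all $V\in L^2(\Omega,\sigma(\UU_A);L^2(\mcX))$, again via Fubini's Theorem. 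With this in place, $\tr\mcC_W=\int_\mcX \var W(\xx)\dd\xx$ for each $W$.

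\textbf{Step 2 (the first-order index).} The definition \eqref{SA} requires $\tr\mcC_Z\neq 0$, so the denominator $\int_\mcX \var Z(\xx)\dd\xx$ is nonzero. Substituting the two trace identities into \eqref{SA} then gives $S_A^{\text{spin}}=S_A$ directly. Here $\var Z_A(\xx)$ is the variance of $\mycev{Z(\xx)}{\UU_A}$, because $\mu_{Z_A}=0$ and subtracting the deterministic function $Z_0$ does not change the variance.

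\textbf{Step 3 (the total index).} For $S_A^t$, I would use the alternative form \eqref{StA:alternative}. That form rests on the orthogonality of $Z_A$, $Z_{A^c}$ and $Z_{A,A^c}$ in $L^2(\Omega;L^2(\mcX))$, which the paper takes from \cite{gamboa_SA_for_multiDim_and_funtional_outputs} together with the independence of $\UU_A$ and $\UU_{A^c}$. Applying the trace formula to $\tr\mcC_{Z_{A^c}}$ and to $\tr\mcC_Z$ then yields \eqref{StA:spin}, which completes the proof.
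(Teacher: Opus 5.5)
Your proposal is correct and follows the paper's proof: the trace formula \eqref{trace is integrated variance} is applied to numerator and denominator of \eqref{SA}, and to \eqref{StA:alternative} for the total index. Your Step 1 adds a justification the paper leaves implicit, namely a jointly measurable version of $\mycev{Z}{\UU_A}$ so that Tonelli's Theorem applies; this tightens the argument without changing it.
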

\begin{proof}
The identity in \eqref{SA:spin} follows from the trace formula in \eqref{trace is integrated variance}, while \eqref{StA:spin} follows from \eqref{StA:alternative} and again \eqref{trace is integrated variance}.
\end{proof}
It is important to note that the SpIn indices are not novel sensitivity indices.
They are indices in the literature \cite{gamboa_SA_for_multiDim_and_funtional_outputs} expressed in a computable way.
Moreover, as can be seen from the expressions in \eqref{SA:spin} and \eqref{StA:spin}, the SpIn indices are not equal to volume (spatial) integrals of pointwise in $\xx$ defined Sobol' indices.
However, we feel that the volume integrals are important for the construction and interpretation of the indices. 
The terminology SpIn reflects this.
 We now write an equivalent expression that allows the use of a pick-freeze scheme \cite{gamboa_pickFreeze,janon2014asymptotic_of_aggr_and_scal_PickFreeze} 
 to estimate $S^{\text{spin}}_A$ and $S^{\text{spin},t}_A$.
 \begin{proposition} \label{lemma:spatially_integrated_sobol_pickfreeze}
    Let the setup of Proposition~\ref{proposition:spatially_integrated_sobol_index} hold. 
    Let $A\in\mcP_{n_U}$ be an index set.
    Let $\UU' \sim \UU$ i.i.d..
    Define the random vector $\UU^{\sim A}\coloneqq(\UU_A,\UU'_{A^c})^\top$, and random field $Z^{\sim A}(\xx,\omega)=\mathds{1}_{\{G(\xx,\UU^{\sim A})\in \mcA\}}$. Then it holds 
    \begin{equation} \label{equation:spin_sobol_index_pickfreeze}
        S^{\text{spin}}_A
        =
        \frac{\int_\mcX\cov{Z(\xx)}{Z^{\sim A}(\xx)}\dd \xx}{\int_\mcX\cov{Z(\xx)}{Z(\xx)}\dd \xx}
    \end{equation}
    and 
    \begin{equation} \label{equation:total_spin_sobol_index_pickfreeze}
        S^{\text{spin},t}_{A}
        =
        {1-}\frac{\int_\mcX\cov{Z(\xx)}{Z^{\sim{A^c}}(\xx)}\dd \xx}{\int_\mcX\cov{Z(\xx)}{Z(\xx)}\dd \xx}.
    \end{equation}
 \end{proposition}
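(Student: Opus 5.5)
The approach is to reduce both identities to the classical pointwise pick-freeze identity and then integrate over $\mcX$. Throughout, $Z_A(\xx)$ denotes $\mycev{Z(\xx)}{\UU_A}-Z_0(\xx)$. By Proposition~\ref{proposition:spatially_integrated_sobol_index}, it suffices to show two equalities for almost every $\xx\in\mcX$:
\[
\var Z_A(\xx)=\cov{Z(\xx)}{Z^{\sim A}(\xx)}, \qquad \var Z(\xx)=\cov{Z(\xx)}{Z(\xx)} .
\]
The second equality is trivial. Integrating the first over $\mcX$ then gives \eqref{equation:spin_sobol_index_pickfreeze}. Replacing $A$ by $A^c$ in the first equality and using \eqref{StA:spin} gives \eqref{equation:total_spin_sobol_index_pickfreeze}.

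For the pointwise identity, fix $\xx$. Write $Z(\xx)=\mathds{1}_{\{G(\xx,\UU)\in\mcA\}}=f_{\xx}(\UU_A,\UU_{A^c})$ with a bounded measurable $f_{\xx}$, and $Z^{\sim A}(\xx)=f_{\xx}(\UU_A,\UU'_{A^c})$. Since $\UU_A$, $\UU_{A^c}$ and $\UU'_{A^c}$ are mutually independent, conditionally on $\UU_A$ the two variables $Z(\xx)$ and $Z^{\sim A}(\xx)$ are independent and identically distributed. Both therefore have the same conditional mean
\[
g_{\xx}(\UU_A)=\int f_{\xx}(\UU_A,\vs)\dd\Pprob_{\UU_{A^c}}(\vs)=\mycev{Z(\xx)}{\UU_A}.
\]
By the tower property, $\ev{Z(\xx)Z^{\sim A}(\xx)}=\ev{g_{\xx}(\UU_A)^2}$. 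Also $\ev{Z^{\sim A}(\xx)}=\ev{Z(\xx)}=Z_0(\xx)$, so $\cov{Z(\xx)}{Z^{\sim A}(\xx)}=\ev{g_{\xx}^2}-Z_0(\xx)^2=\var g_{\xx}(\UU_A)=\var Z_A(\xx)$. All quantities are bounded by $1$, so no integrability issues arise pointwise.

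The main obstacle is to justify that the pointwise conditional expectation $g_{\xx}(\UU_A)$ coincides, for a.e.\ $\xx$, with the evaluation at $\xx$ of the Bochner conditional expectation $\mycev{Z}{\UU_A}$. The latter is defined as an orthogonal projection in $L^2(\Omega;L^2(\mcX))$, and $Z_A$ is built from it. I would handle this using joint measurability of $(\xx,\omega)\mapsto f_{\xx}(\UU(\omega))$, which follows from measurability of $G$. By Fubini, $(\xx,\omega)\mapsto g_{\xx}(\UU_A(\omega))$ is then jointly measurable, bounded, and $\sigma(\UU_A)$-measurable as an $L^2(\mcX)$-valued variable. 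To show it is the projection, test against $W=\mathds{1}_{B}\,v$ for $B\in\sigma(\UU_A)$ and $v\in L^2(\mcX)$. Fubini gives $\ev{\langle Z-g_\cdot(\UU_A),W\rangle}=\int_\mcX v(\xx)\,\ev{(Z(\xx)-g_{\xx})\mathds{1}_B}\dd\xx=0$, and such $W$ span a dense subset of $L^2(\Omega,\sigma(\UU_A);L^2(\mcX))$. With this identification, Tonelli justifies exchanging expectation and spatial integration, exactly as in \eqref{trace is integrated variance}. Integrating the pointwise identity then completes both formulas.
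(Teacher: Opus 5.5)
Your proposal is correct and follows the paper's proof. The paper likewise reduces both formulas via Proposition~\ref{proposition:spatially_integrated_sobol_index} to the pointwise identity $\var Z_B(\xx)=\cov{Z(\xx)}{Z^{\sim B}(\xx)}$ and integrates over $\mcX$, but it cites that identity from a textbook lemma rather than proving it, and it leaves implicit your additional step identifying the Bochner conditional expectation with the pointwise version $g_{\xx}(\UU_A)$.
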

\begin{proof}
    This follows from 
    $\var Z_B(\xx)=\var \mycev{Z(\xx)}{\UU_B}=\cov{Z(\xx)}{Z^{\sim B}(\xx)}$ for $B \in \mcP_{n_U}$ which holds pointwise in $\xx$, see e.g. \cite[Lemma 3.12]{SA_book}. 
\end{proof}

We use \eqref{equation:spin_sobol_index_pickfreeze} in the numerical experiments in Section~\ref{section:cdr_results}.
The FEM-based numerical calculation of \eqref{equation:spin_sobol_index_pickfreeze} is presented in Section~\ref{section:estimation_of_sensitivity_indices}.

\section{Estimation of the sensitivity indices} \label{section:estimation_of_sensitivity_indices}

In this section we present the estimators for the sensitivity indices in Sections~\ref{section:kernel_based} and \ref{section:SpIn}. 
Since we focus on PDE-based models discretized by the FEM, we first introduce the notation associated with the spatial discretization. 
Let $\mathcal{T}_h$ be a conforming mesh on $\mcX$ with mesh size $h>0$.
Let $\{\varphi_i\}_{i=1}^{N_h}$ denote the corresponding global FE basis functions defined on the mesh $\mathcal{T}_h$.
The FE mass matrix $M \in\mathbb{R}^{N_h\times N_h}$ is given by
$M = \left[\int_\mcX\varphi_i(\xx)\varphi_j(\xx) \dd \xx \right]_{i,j}$, $i,j\in\{1,\dots,N_h\}$. 
In Section \ref{section:HSIC_estimation} and \ref{section:SpIn_estimation} we discuss the estimation of the HSIC-ANOVA and SpIn sensitivity indices, respectively. The corresponding algorithms are listed in the Supplementary Materials, see Algorithm~\ref{SM_algorithm:1_SpInSobol_main} for the HSIC-ANOVA indices  and Algorithm~\ref{SM_algorithm:3_HSIC_main} for the SpIn Sobol' indices.
\subsection{Estimation of the HSIC-ANOVA sensitivity indices}\label{section:HSIC_estimation}

First, we identify suitable kernels associated with the input and output space of the map $\eta$ in \eqref{eta}.
We assume that the input random variables $U_i \sim \text{Unif}[0,1]$, $i=1,\dots,n_U$; this can be achieved by the transformation $U_i:=F_{Z_i}(Z_i)$ for an input random variable $Z_i$ with cumulative distribution function (CDF) $F_{Z_i}$.
Hence we work with the sets $\mcU_i=[0,1]$, $i=1,\dots,n_U$ and $\mcU=[0,1]^{n_U}$.
We begin with a kernel defined on $[0,1]\times [0,1]$, 
the Order-1 Sobolev Kernel, see  \cite[Eq.~(12)]{daVeiga2021kernel}. 
\begin{definition} (Order-1 Sobolev Kernel)
    For any $u,u' \in [0,1]$, the Order-1 Sobolev Kernel is given by
    \begin{equation} \label{equation:order_1_sobolev_kernel}
        k_{\text{Sob}}(u,u'):=1+\left(u-\frac{1}{2}\right) \left(u'-\frac{1}{2}\right)+\frac{1}{2}\left[(u-u')^2-|u-u'|+\frac{1}{6}\right].
    \end{equation}
\end{definition}
The tensor product Order-1 Sobolev kernel on $\mcU \times \mcU$ is defined according to Eq.\ \eqref{eq:product_kernel},
\begin{equation}\label{k_in}
k_\text{in}(\uu,\uu')
:=
\prod_{i=1}^{n_U} k_\text{Sob}(u_i,u_i'), \quad \uu, \uu' \in \mcU.
\end{equation}
As required by Proposition~\ref{proposition:hsic_decomposition}, the kernel $k_\text{in}$ is an ANOVA kernel and satisfies Assumption~\ref{hp:kernel}.
{Moreover, the kernel $k_\text{Sob}$ in \eqref{equation:order_1_sobolev_kernel} is characteristic, see \cite[Prop.~3.6]{sarazin2023new}.}
We work with the kernel $k_\text{in}$ due to its low computational complexity and absence of hyperparameters, which distinguishes it from other ANOVA kernels explored in, e.g., \cite{ginsbourger2016anova}. 
Indeed, the exponential, Gaussian, and Matérn kernels presented in \cite{ginsbourger2016anova} require choosing hyperparameters. 
Moreover, their evaluation involves computationally more expensive steps, such as exponentiation or a CDF calculation for each input pair of the kernel. 
In contrast, evaluating the Order-1 Sobolev kernel amounts to the evaluation of a quadratic polynomial.

Recall that if the input random variables are not uniformly distributed on $[0,1]$, a transformation can be employed. 
In the numerical experiments in Section~\ref{section:cdr_results} we work with uniform random variables on a generic interval $[a,b]$, $a<b$, and with log-uniform random variables. 
For $Z \sim \text{Unif}[a,b]$, the random variable $\frac{Z-a}{b-a} \sim \text{Unif}[0,1]$. 
Similarly, for $\widetilde{Z} \sim \text{LogUnif}[c,d]$, $0 <c<d$, the random variable $\frac{\log(\widetilde{Z})-\log(c)}{\log(d)-\log(c)} \sim \text{Unif}[0,1]$.

Finally, for the output space $\mathcal{L}^*(\mcX)$ of the map $\eta$ in \eqref{eta} we introduce the following set kernel:
\begin{equation} \label{equation:set_kernel}
    k_{\text{set}}:\mathcal{L}^*(\mcX)\times\mathcal{L}^*(\mcX)\rightarrow\mathbb{R}, \quad k_{\text{set}}(\Gamma_1,\Gamma_2):=\exp\left(-\frac{\lambda(\Gamma_1\Delta \Gamma_2)}{2\sigma^2}\right),
\end{equation}
where     
\begin{align}\label{eq:lebesgue_sym_diff}
    \lambda(\Gamma_1\Delta \Gamma_2) = \int_\mcX\mathds{1}_{\Gamma_1\Delta \Gamma_2} \dd\lambda = \int_\mcX(\mathds{1}_{\Gamma_1}-\mathds{1}_{\Gamma_2})^2 \dd\lambda 
    = \int_\mcX |\mathds{1}_{\Gamma_1}-\mathds{1}_{\Gamma_2}| \dd\lambda ,
\end{align} 
the Lebesgue measure of the symmetric difference of the sets $\Gamma_1$ and $\Gamma_2$.
The identity in the second equality sign in \eqref{eq:lebesgue_sym_diff} is proved in \cite[Lemma 3.1]{fellmann_kernelSA}.
The function $k_\text{set}$ is symmetric and positive definite for any $\sigma\in\mathbb{R}_{>0}$ (i.e.~it is a kernel), continuous, bounded, and characteristic, see \cite{fellmann_kernelSA}, thus satisfying Assumption~\ref{hp:kernel}.
To the best of our knowledge, no other kernels for sets have been proposed in the literature so far.
\subsubsection{The estimator}\label{subsubsectopm:hsic_estimator}
For $A \in \mathcal{P}_{n_U}$, let $k_A(\uu_A,\uu_A'):=\prod_{i \in A} k_\text{Sob}(u_i,u_i')$.
Recall that $\text{HSIC}(\UU_A,\Gamma)$ is defined as the square of the Hilbert--Schmidt norm of a certain cross-covariance operator.
Fortunately, $\text{HSIC}(\UU_A,\Gamma)$ can be expressed in terms of expectations of the kernels $k_A$ and $k_\text{set}$.
Further simplifications are possible since $k_A$ is an ANOVA kernel. This implies the form \eqref{anova:plus one} of $k_A$, where the non-constant terms in the sum are centered by the property in \eqref{anova:centered}.
Let $\UU_A' \sim \UU_A$ i.i.d. and let $\Gamma'\sim \Gamma$ i.i.d..
Then it holds \cite[Lemma 1]{Gretton2005_CrossCovIntro}
\begin{equation*}
    \begin{split}
    \text{HSIC}(\UU_A,\Gamma) 
    &=
    \ev{(k_A(\UU_A,\UU_A')k_\text{set}(\Gamma,\Gamma')}
    +
    \ev{k_A(\UU_A,\UU_A'}\ev{k_\text{set}(\Gamma,\Gamma')}\\
    &-
    2\cev{\cev{k_A(\UU_A,\UU_A')}{\UU_A'}\cev{k_\text{set}(\Gamma,\Gamma')}{\Gamma'}}{\UU_A, \Gamma}\\
    &=
    \ev{(k_A(\UU_A,\UU_A')k_\text{set}(\Gamma,\Gamma')}
    +
    1 \cdot \ev{k_\text{set}(\Gamma,\Gamma')}\\
    &-
    2\cev{1 \cdot \cev{k_\text{set}(\Gamma,\Gamma')}{\Gamma'}}{\UU_A, \Gamma}\\
    &=
    \ev{(k_A(\UU_A,\UU_A')-1)k_\text{set}(\Gamma,\Gamma')}.
    \end{split}
\end{equation*}
The latter expression motivates the following estimator.
Let $(\UU_A^{(i)},\Gamma^{(i)} ) \sim (\UU_A,\Gamma)$, $i=1,\dots,n$, be i.i.d. input-output pairs, each following the joint distribution $\Pprob_{\UU_A,\Gamma}$. 
The estimator of $\text{HSIC}(\bm{U}_A,\Gamma)$ proposed in \cite{fellmann_kernelSA} is given by
\begin{equation}\label{equation:hsic_estimation}
    \begin{aligned}
        \widehat{\text{HSIC}}(\bm{U}_A,\Gamma) &= 
        \frac{2}{{n(n-1)}}\sum^n_{j=2}\sum^{j-1}_{i=1}{K}^{(i,j)}_{A}{K}^{(i,j)}_{\Gamma}, 
    \end{aligned}
\end{equation} 
where ${K}^{(i,j)}_{A}$ and ${K}^{(i,j)}_{\Gamma}$ denote the $(i,j)$-th entries of the matrices $K_A, K_\Gamma \in\mathbb{R}^{n\times n}$, respectively, defined as 
\begin{align} 
    \label{equation:the_K_A_matrix}  
    K_A &\coloneqq \Bigg[ (1-\delta_{ij})\bigg(\bigg[\prod_{l\in A} k_\text{Sob}(U_{l}^{(i)},U_{l}^{(j)})\bigg]-1\bigg)\Bigg]_{i,j}, \quad i,j\in\{1,\dots,n\}, \\  
    \label{equation:the_K_Gamma_matrix}
    K_\Gamma &\coloneqq \Bigg[(1-\delta_{ij})k_\text{set}(\Gamma^{(i)},\Gamma^{(j)})\Bigg]_{i,j}, 
    \quad i,j\in\{1,\dots,n\}.
\end{align}
Note that the diagonal entries of the matrices $K_A$ and $K_\Gamma$ are set equal to zero since these entries do not appear in the estimator in \eqref{equation:hsic_estimation}.
Moreover, the estimator in \eqref{equation:hsic_estimation} is unbiased. 
Plugging in the expression for the set kernel \eqref{equation:set_kernel} we can rewrite $K_\Gamma$ as 
\begin{equation}
    K_\Gamma= \Bigg[(1-\delta_{ij})\exp{\bigg( -\frac{\Lambda_{ij}}{2\sigma^2} \bigg)}\Bigg]_{i,j}, \quad i,j\in\{1,\dots,n\}
\end{equation}
with the matrix $\Lambda \in \mathbb{R}^{n \times n}$ given by
\begin{equation}\label{Lambda}
    \Lambda = \Bigg[ \int_\mcX(\mathds{1}_{\Gamma^{(i)}}-\mathds{1}_{\Gamma^{(j)}} ) ^2 \dd\lambda \Bigg]_{i,j}, \quad i,j\in\{1,\dots,n\}.
\end{equation}
Note that the set kernel involves the scaling parameter $\sigma>0$ which we are free to choose.
Following \cite[Section 4.1]{fellmann_kernelSA} we define $\sigma$ as 
\begin{equation}
    \sigma^2=\frac{1}{n^2}\sum_{i,j=1}^n\Lambda_{ij}.
\end{equation}
Finally, the estimator for the HSIC-ANOVA index is defined as
\begin{equation}\label{HSIC final estimate}
    \widehat S_A^{HSIC} 
    =
    \frac{\widehat{\text{HSIC}}(\UU_A,\Gamma)}{\widehat{\text{HSIC}}(\UU,\Gamma)}.
\end{equation}

\subsubsection{{Estimator cost}}
To compute all $n_U$ first-order HSIC-ANOVA indices $S_A^{HSIC}$ (where $|A|=1$) with $n$ Monte Carlo samples, we require $n$ evaluations of the process model $G$ to generate i.i.d. input-output pairs $(\UU^{(i)}, \Gamma^{(i)}) \sim (\UU, \Gamma)$, $i=1, \dots,n$.
With this we compute the denominator in \eqref{HSIC final estimate}.
Realizations of $\UU_A^{(i)}$ can be obtained by selecting the components $\UU_\ell^{(i)}$ with index $\ell \in A$ in the random vector $\UU^{(i)}$.
With this we evaluate the numerator in \eqref{HSIC final estimate}. 
The estimator cost is independent of the number of inputs $n_U$.
Note that the expression in \eqref{equation:hsic_estimation} is a double summation over $n$ indices, however, we only require $n$ model evaluations in this expression.

\subsubsection{FE approximation of the volume integral in the set kernel}\label{sec:FE:volume:set kernel}
The construction of $K_A$ requires evaluations of the kernel $k_\text{Sob}$ which is defined in terms of elementary functions. 
However, building $K_\Gamma$ requires evaluations of the set-kernel in \eqref{equation:set_kernel}, which involves a volume integral over $\mcX$, see \eqref{Lambda}. 
In \cite{fellmann_kernelSA}, the authors propose a simple Monte Carlo approximation for the volume integral. 
This Monte Carlo estimator is nested inside the outer Monte Carlo estimator of the sensitivity index.
This approach is inefficient: For each pair of random sets $\Gamma^{(i)}$ and $\Gamma^{(j)}$ in the HSIC-ANOVA estimator in \eqref{equation:hsic_estimation}, one needs to perform a Monte Carlo estimation in $\mcX$ to approximate the Lebesgue measure of the pair's symmetric difference. 
In this work we focus on PDE-based models discretized by the FEM. 
We can therefore exploit the spatial discretization of $\mcX$ provided by the FE mesh and obtain a direct numerical approximation of the volume integral without additional Monte Carlo samples. 
We proceed as follows. 
For each realization $\Gamma^{(i)}$ of the random set $\Gamma$ we approximate the indicator function $\mathds{1}_{\Gamma^{(i)}}$ in the FE basis.
The approximation is $\mathds{1}_{\Gamma^{(i)},h}=\sum_{k=1}^{N_h} c_k^{(i)}\varphi_k$, and the coefficients are determined by the interpolation conditions $\mathds{1}_{\Gamma^{(i)},h}(\xx_m)=\mathds{1}_{\Gamma^{(i)}}(\xx_m)$, $m=1,\dots,n_\text{vert}$, where $\xx_m$ is a vertex of the FE mesh.
For example, in Section~\ref{section:cdr_results} we employ finite elements with piecewise linear, continuous shape functions on $\mcX$.
In this case $n_\text{vert}=N_h$ and $c^{(i)}_k=\mathds{1}_{\Gamma^{(i)}}(\xx_k)$, $k=1,\dots,N_h$, are uniquely determined.
Finally, replacing $\mathds{1}_{\Gamma^{(i)}}$ by its FE approximation $\mathds{1}_{\Gamma^{(i)},h}$ in \eqref{Lambda} allows us to evaluate the volume integral exactly as the inner product of the coefficient vectors weighted by the FE mass matrix:
\begin{align*}
    \int_{\mcX}(\mathds{1}_{\Gamma^{(i)},h}(\xx)-\mathds{1}_{\Gamma^{(j)},h}(\xx))^2 \dd \xx 
    &= \int_{\mcX} \left(\sum_{k=1}^{N_h} c^{(i)}_k\varphi_k(\xx)-\sum_{k=1}^{N_h}c_k^{(j)}\varphi_k(\xx) \right)^2 \dd \xx \\
    &= \int_{\mcX}\bigg( \sum_{k=1}^{N_h}(c^{(i)}_k-c^{(j)}_k)\varphi_k(\xx)\bigg)\bigg( \sum_{\ell=1}^{N_h}(c^{(i)}_\ell-c^{(j)}_\ell)\varphi_\ell(\xx)\bigg) \dd \xx \\
    &= \sum_{k=1}^{N_h}\sum_{\ell=1}^{N_h}(c^{(i)}_k-c^{(j)}_k)(c^{(i)}_\ell-c^{(j)}_\ell)\int_{\mcX}\varphi_k(\xx)\varphi_\ell(\xx) \dd \xx \\
    &= (\cc^{(i)}-\cc^{(j)})^{\top}M(\cc^{(i)}-\cc^{(j)}). 
\end{align*}   

\subsection{{Estimation of the spatially-integrated sensitivity indices}}\label{section:SpIn_estimation}
We discuss the estimation of the generalized Sobol' index $S_A^\text{spin}$ given in \eqref{equation:spin_sobol_index_pickfreeze} for PDE-based process models discretized by the FEM. 
The estimator for the generalized total Sobol' index $S_A^{\text{spin},t}$ is analogous.
Each estimator has two parts: the estimation of the covariance term and the approximation of the volume integral over $\mcX$.

\subsubsection{The pick-freeze estimator}
Let $Z=\mathds{1}_{\Gamma} \in L^2(\Omega; L^2(\mcX))$ denote the indicator random field associated with the random set $\Gamma$. 
For $A \in \mcP_{n_U}$ let $Z_A \in L^2(\Omega; L^2(\mcX))$ denote the conditional random field defined in \eqref{Z_A}.
We estimate the covariance terms in \eqref{equation:spin_sobol_index_pickfreeze} with a pick-freeze scheme \cite{gamboa_pickFreeze, janon2014asymptotic_of_aggr_and_scal_PickFreeze} with $n\in \mathbb{N}$ Monte Carlo samples each.
To this end, let
$(\UU^{(i)}_{I})_{i=1}^n$, $(\UU^{(i)}_{II})_{i=1}^n$ and $(\UU^{(i)}_{III})_{i=1}^n$ denote $3n$ i.i.d. random input data realizations of the random vector $\UU$, organized in the arrays $\UU_{I}$, $\UU_{II}$ and $\UU_{III}$.
Let $\UU_{\sim A}^{(i)}:=(\UU_{II,A}^{(i)}, \UU_{III,A^c}^{(i)})$ and define
\begin{align}
    Z_I^{(i)}(\xx)&\coloneqq\mathds{1}_{\{G(\xx,\UU_I^{(i)})\in \mcA\}},\quad
    Z_{II}^{(i)}(\xx)\coloneqq\mathds{1}_{\{G(\xx,\UU_{II}^{(i)})\in \mcA\}},\label{eq:ZIZII}\\
    Z_{\sim A}^{(i)}(\xx)&\coloneqq\mathds{1}_{\{G(\xx,\UU_{\sim A}^{(i)})\in \mcA\}}, \quad i=1,\dots,n.\label{eq:ZnotA}
\end{align}
Recall that $\cov{Z(\xx)}{Z^{\sim A}(\xx)}=\ev{Z(\xx)Z^{\sim A}(\xx)}-\ev{Z(\xx)}\ev{Z^{\sim A}(\xx)}$ for each $\xx \in \mcX$.
This motivates the estimator
\begin{equation}\label{spin:estimator}
    \widehat{S}^\text{spin}_A
    :=
    \frac{\int_\mcX \left(\frac1n\sum_{i=1}^n Z_{II}^{(i)}(\xx) Z_{\sim A}^{(i)}(\xx) - \frac1n \sum_{i=1}^n Z_{II}^{(i)}(\xx) \frac1n \sum_{i=1}^n Z^{(i)}_{\sim A}(\xx)\right)\dd \xx}{\int_\mcX \left(\frac1n\sum_{i=1}^n Z_{I}^{(i)}(\xx) Z_{I}^{(i)}(\xx) - \frac1n \sum_{i=1}^n Z_{I}^{(i)}(\xx) \frac1n \sum_{i=1}^n Z^{(i)}_{I}(\xx)\right)\dd \xx}.
\end{equation}
Crucially, the volume integrals in \eqref{spin:estimator} can be approximated using the FE mesh and FE basis as in Subsection~\ref{sec:FE:volume:set kernel}. 

\subsubsection{{Estimator cost}}
To compute all $n_U$ first-order spatially-integrated Sobol' indices $S_A^\text{spin}$ (where $|A|=1$) with $n$ Monte Carlo samples, we require $n \cdot (2\cdot  n_U+1)$ evaluations of the process model $G$.
For each Monte Carlo sample, one input-output pair is used to calculate a realization of the indicator random field $Z_I^{(i)}$ in \eqref{eq:ZIZII}.
The remaining input-output pairs are used to calculate realizations of $Z_{II}^{(i)}$ in \eqref{eq:ZIZII} and $Z_{\sim A}^{(i)}$ in \eqref{eq:ZnotA} for each of the $n_U$ index sets $A$ with $|A|=1$. 
Note that the estimator cost grows linearly in the number of inputs $n_U$.

\subsubsection{FE approximation of the volume integral}
We replace the random field realizations in \eqref{spin:estimator} by approximations in the given FE basis.
The approximations read 
\begin{align}
Z_{I,h}^{(i)}(\xx)
&=
\sum_{k=1}^{N_h} c^{(i)}_{I,k} \varphi_k(\xx), \quad
Z_{II,h}^{(i)}(\xx)
=
\sum_{k=1}^{N_h} c^{(i)}_{II,k} \varphi_k(\xx), \quad
Z_{\sim A,h}^{(i)}(\xx)
=
\sum_{k=1}^{N_h} c^{(i)}_{\sim A,k} \varphi_k(\xx).
\end{align}
The coefficients $\cc_{I}^{(i)}:=(c_{I,1}^{(i)}, \dots, c_{I,N_h}^{(i)})^\top$, $\cc_{II}^{(i)}:=(c_{II,1}^{(i)}, \dots, c_{II,N_h}^{(i)})^\top$ and $\cc_{\sim A}^{(i)}:=(c_{\sim A,1}^{(i)}, \dots, c_{\sim A,N_h}^{(i)})^\top$ are determined by interpolation conditions, see Subsection~\ref{sec:FE:volume:set kernel}. 
Finally, inserting the FE approximations of the random field realizations into \eqref{spin:estimator} and rearranging gives the FEM-SpIn estimator
\begin{equation}\label{fem:spin:estimator}
\widehat S_{A, h}^\text{spin}
:=
\frac{\frac1n \sum_{i=1}^n {\cc^{(i)}_{II}}^\top M \cc^{(i)}_{\sim A}-\left(\frac1n \sum_{i=1}^n (\cc_{II}^{(i)})^\top\right) M \left(\frac1n\sum_{i=1}^n \cc_{\sim A}^{(i)}\right)}{\frac1n \sum_{i=1}^n {\cc^{(i)}_{I}}^\top M \cc^{(i)}_{I}-\left(\frac1n \sum_{i=1}^n (\cc_{I}^{(i)})^\top\right) M \left(\frac1n\sum_{i=1}^n \cc_{I}^{(i)}\right)},
\end{equation}
where $M$ is the mass matrix associated with the FE basis functions.

Note that the estimator in \eqref{fem:spin:estimator} is biased and can be affected by rounding errors if the two quantities subtracted are of similar size.
However, when using piecewise linear, continuous finite element shape functions, we don't expect cancellations by rounding errors since in this case the coefficient vectors $\cc_I$, $\cc_{II}$ and $\cc_{\sim A}$ are realizations of binary random vectors with entries either equal to zero or equal to one.
Thus, the quantities in the subtraction differ by a factor of order $\mathcal{O}(1/n)$.

\section{Numerical experiments}\label{section:cdr_results}
In this section we study a PDE-based process model, namely a hydrogen combustion process.
We present the model equations and parameter values in Section~\ref{subsection:experims_model_systems_cdr}, the numerical discretization in Section~\ref{subsection:numerical_setup}, and the corresponding feasible spatial sets and observation windows in Section~\ref{subsection:feasible set}.
Section~\ref{subsection:numerical_results} has numerical studies, in particular a comparison of the HSIC-ANOVA and the SpIn sensitivity indices and their performance for different spatial observation windows.

\subsection{Process model}\label{subsection:experims_model_systems_cdr}

We consider the hydrogen combustion process $2H_2+O_2\rightarrow2H_2O$ introduced in \cite{paper:CDR_paper2, paper:CDR_paper1}.
This process is modelled by a coupled system of nonlinear convection-diffusion-reaction (CDR) PDEs.
The PDE solution has four components, $\yy = \yy(t,\xx) = (Y_F,Y_O,Y_P,Y_T)^\top$, where  
$Y_F$ is the mass fraction of the fuel (hydrogen), $Y_O$ is the mass fraction of the oxidizer (oxygen) , and $Y_P$ is the mass fractions of the product (water).
$Y_T$ denotes the temperature field. 
The PDEs are posed on the domain $\mcX:= (0.0,0.5)\times(0.0,1.0) \subset \mathbb{R}^2$ (see Figure~\ref{fig:CDR_spat_dom}) and read
\begin{equation} \label{eqn:experims_models_cdr_pde}
    \frac{\partial y_i}{\partial{t}}
    = 
    \kappa\,\Delta y_i- \ww \cdot \nabla y_i+\vs_i(\yy), \quad i\in \{F,O,P,T\}.
\end{equation}
In \eqref{eqn:experims_models_cdr_pde} $\kappa \in \mathbb{R}$ is the molecular diffusivity and $\ww \in \Rset^2$ is a divergence-free velocity field. 
The reaction term $\vs(\yy) = (s_F, s_O, s_P, s_T)^\top \in \mathbb{R}^4$ uses an Arrhenius-type expression,
\begin{equation} \label{eqn:experims_models_cdr_pde_nonlinearReactionSourceTerm}
    \begin{aligned} 
        s_i(\yy) 
        &= 
        -\nu_i\left(\frac{W_i}{\rho}\right)\left(\frac{\rho{Y_F}}{W_F}\right)^{\nu_F}\left(\frac{\rho{Y_O}}{W_O}\right)^{\nu_O}A\,{\exp}\left(-\frac{E}{RT}\right),\;\; i\in\{F,O\}, \\
        s_P(\yy) 
        &= 
        \nu_P\left(\frac{W_P}{\rho}\right)\left(\frac{\rho{Y_F}}{W_F}\right)^{\nu_F}\left(\frac{\rho{Y_O}}{W_O}\right)^{\nu_O}A\,{\exp}\left(-\frac{E}{RT}\right), \\
        s_T(\yy) 
        &= 
        Q \cdot s_P(\yy),
    \end{aligned}
\end{equation}
where $A$ and $E$ are the pre-exponential factor and the activation energy, respectively. 
Further parameters are the stoichiometric coefficients $\nu_i$ of the combustion process, the molecular weights $W_i$, the density of the mixture $\rho$, the heat of reaction $Q$, and the universal gas constant $R$. 
The initial and boundary conditions are
\begin{equation}\label{eqn:experims_models_cdr_pde_BCs_ICs}
    \begin{aligned}
        \yy(0,\xx) &= \yy_0(\xx), \quad &&\forall \xx \in \mcX,&&\\
        \yy(t,\xx)&=\yy_j(\xx), \quad &&\forall t>0, \quad &&\forall \xx \in \mcB_j, \quad j=1,2,3,\\ \nn \cdot \nabla y_i(t,\xx) &= 0, \quad &&\forall t>0, \quad &&\forall \xx \in \mcB_N, \quad i=1,2,3,4, 
    \end{aligned}
\end{equation}
where the boundary $\partial \mcX =: \mcB = \mcB_1 \cup \mcB_2 \cup \mcB_3 \cup \mcB_N$ is split into three Dirichlet and one Neumann part, respectively  (see Figure~\ref{fig:CDR_spat_dom}).
At the initial time $t=0$ the domain is empty with a temperature of 300 Kelvin, that is, $\yy_0=(0,0,0,300)^\top$ in \eqref{eqn:experims_models_cdr_pde_BCs_ICs}.
The conditions on the different portions of the Dirichlet boundary are 
\begin{equation}\label{IC more}
    \begin{aligned}
        \yy_1 & = (0,0,0, T_o)^\top, \\
        \yy_2 & = (Y_{F}^{(0)}(\phi), 0.2259, 0, T_i)^\top, \\
        \yy_3 & = (0,0,0, T_o)^\top,
    \end{aligned}
\end{equation}
where $T_i$ is the constant inflow temperature, $T_o$ is the constant boundary temperature next to the inflow boundary, and the oxidizer fraction at the inflow is set to $0.2259$.
In \eqref{IC more} we define $Y_{F}^{(0)}(\phi)$ via the fuel-to-oxidizer ratio $\phi$ as
\begin{align}
    Y_{F}^{(0)}(\phi) := \frac{\phi\cdot 0.2259}{8},
\end{align}
where we used the oxidizer fraction $0.2259$ at the inflow, and $8$ is the mass stoichiometric ratio.
Finally, we impose homogeneous Neumann boundary condition on $\mcB_N$.
\begin{table}
    \centering
    \begin{tabular}{c|c|c}
        Parameter & Value/Distribution & Unit \\
        \hline
        $\kappa$ & 2.0 & $\frac{\text{cm}^2}{\text{sec}}$ \\
        $\ww$ & $(50,0)^\top$ & $\frac{\text{cm}}{\text{sec}}$ \\
        $A$ & LogUnif [5.5e11, 1.5e12] & $\frac{\text{cm}^6}{{\text{mol}^2}\cdot\text{sec}}$\\
        $E$ & LogUnif [1.5e3, 9.5e3] & $\frac{\text{J}}{\text{mol}}$\\
        $\nu_i$ & $\nu_F=2, \nu_O=1, \nu_P=2$ & [--] \\
        $W_i$ & $W_F = 2.016, W_O = 31.9, W_P = 18 $ & $\frac{\text{gram}}{\text{mol}}$ \\
        $\rho$ & 1.39\text{e-}3 & $\frac{\text{gram}}{\text{cm}^3}$ \\
        $Q$ & 9800 & K \\
        $R$ & 8.314472 & $\frac{\text{J}}{\text{mol}\cdot{\text{K}}}$\\
        $T_o$ & Unif [200, 400] & K \\
        $T_i$ & Unif [850, 1000] & K \\
        $\phi$ & Unif [0.5, 1.5] & [--]
    \end{tabular}
    \caption{Parameters in CDR model.}
    \label{tab:CDR_parameters}
\end{table}
In Table~\ref{tab:CDR_parameters} we list the parameters in the CDR model. 
We model five parameters, $A, E, T_i, T_o, \phi$ as random variables as in \cite{paper:CDR_paper1}.
We collect them in the random vector $\UU=(A,E,T_i,T_o,\phi)^\top $ such that $\yy = \yy(t,\xx,\UU)$.

\begin{figure}[!htb]
    \centering
    \includegraphics[width=0.6\linewidth]{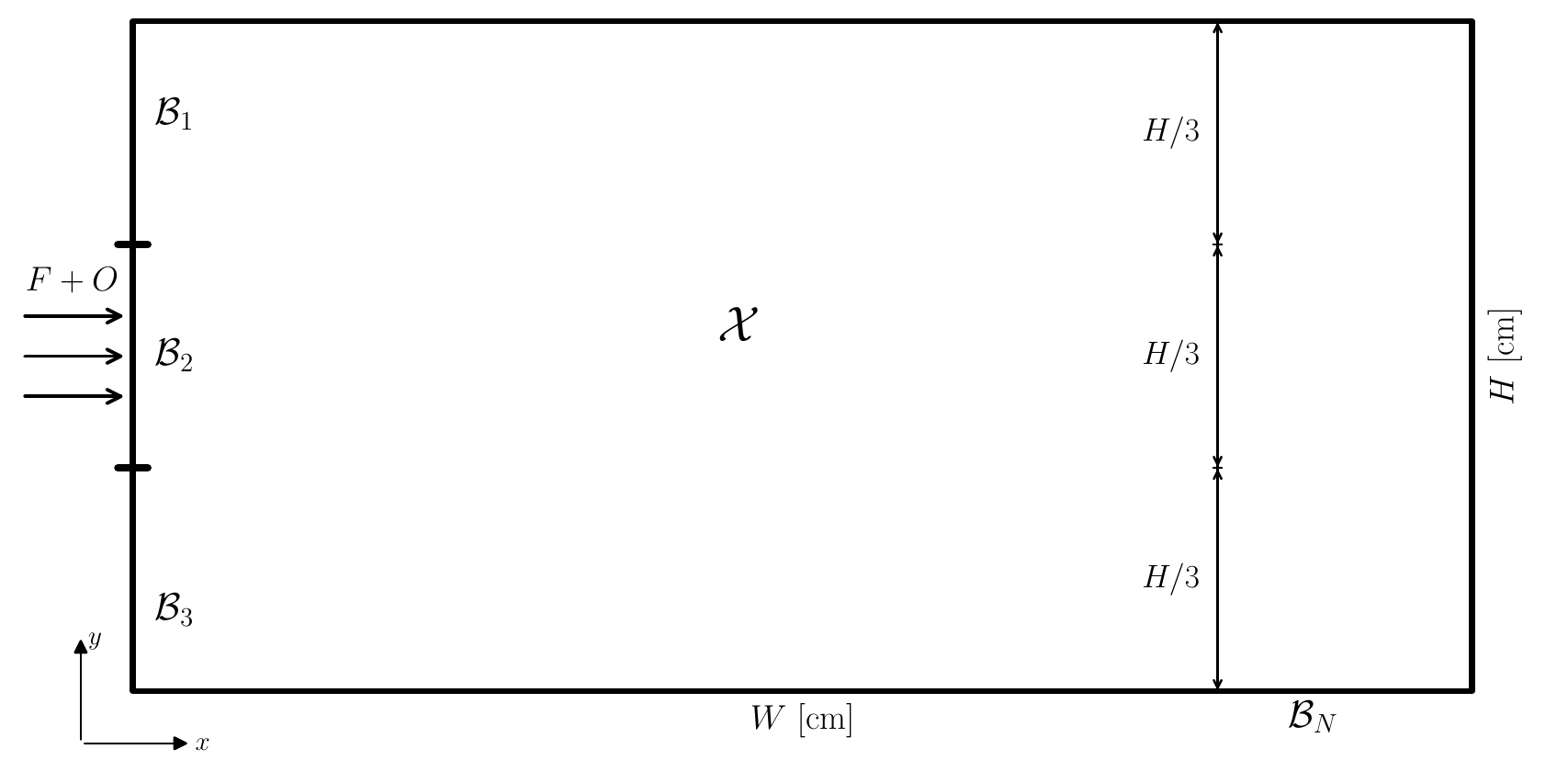}
    \caption{The spatial domain $\mcX$ of the CDR model with boundary $\mcB =\mcB_1\cup\mcB_2\cup\mcB_3 \cup \mcB_N$.}
    \label{fig:CDR_spat_dom} 
\end{figure}      

\subsection{Discretization and numerical solution} \label{subsection:numerical_setup}

First, we sample the random input vector $\UU$ according to the relevant distributions in Table~\ref{tab:CDR_parameters}.
This gives a CDR problem with deterministic coefficients which we solve with finite elements for the spatial discretization and the Crank--Nicolson method \cite{Crank_Nicolson_1947} for the time integration. 
The simulation stops after $t=0.05$ seconds in physical time.
We use $500$ time steps, amounting to a resolution of $\Delta t = 10^{-4}$ seconds per step.
For the spatial domain $\mcX$ in Figure~\ref{fig:CDR_spat_dom}, the width {$W=1$} [cm] and the height {$H=0.5$} [cm]. 
We generated a triangular mesh of $\mcX$ using the software GMSH \cite{paper:gmsh} with a mesh resolution of $h=0.025$ [cm].
The final mesh contains $1,868$ triangular cells and $995$ vertices. 
Each scalar field $Y_F$, $Y_O$, $Y_P$, and $T$ is discretized using piecewise linear, continuous (CG1) finite element shape functions in the software {FEniCS (Version 2019.1.0)} \cite{fenics2015}. 
This results in $995$ degrees of freedom (dofs) for each scalar field and $4 \times 995=3,980$ dofs in total. 
In each time step we solve a nonlinear system of equations using Newton's method.
Therein, we apply the GMRES iterative solver \cite{gmres} with an Incomplete LU preconditioner \cite{saad2003_iterativeMethodsForSparseLinearSystems_ILU} to solve the linearized system (with nonsymmetric coefficient matrix) in each iteration.

\subsection{Feasible set and observation windows}\label{subsection:feasible set}
The solution of the CDR problem in \eqref{eqn:experims_models_cdr_pde} has four components, and we perform the sensitivity analysis for the temperature field $Y_T=Y_T(t,\xx,\UU)$.
We are interested in the setting where the temperature remains below a critical threshold value $T_\text{crit}>0$ at a specified time $t_\text{obs}>0$ and for specified points $\xx \in \mcX_\text{obs}\subseteq \mcX$ in an observation window in the combustion domain.
This defines the feasible set in \eqref{equation:feasible_set} as
\begin{equation}
\label{equation:excursion_set_for_cdr_temp_output}
\Gamma(\uu)
=
\{\xx \in \mcX_\text{obs}\colon Y_T(t=t_\text{obs},\xx,\uu)\leq T_\text{crit}\}, \quad \uu \in \mcU.
\end{equation}
For demonstration purposes we choose $t_\text{obs}=0.05$ [s] and $T_\text{crit}=700$ [K] by observing the average temperature field in a few simulation runs.
{One can replace the inequality constraint in \eqref{equation:excursion_set_for_cdr_temp_output} by other physically meaningful critical values.}
The indicator function associated with the feasible set in \eqref{equation:excursion_set_for_cdr_temp_output} reads
\begin{equation}
    \mathds{1}_{\Gamma}\colon \mcX \times \mcU \rightarrow \Rset, \qquad (\xx,\uu) \mapsto \mathds{1}_{\{\xx \in \mcX_\text{obs}\colon Y_T(t=t_\text{obs}, \xx, \uu)\leq T_\text{crit}\}},
\end{equation}
In Figures~\ref{fig:temp_and_indicator_47}, \ref{fig:temp_and_indicator_48} and \ref{fig:temp_and_indicator_ab} we plot the finite element approximation of this indicator function for different realizations $\uu$ of the random input vector $\UU$.
Note that since we use CG1 elements in our FEM setup, the indicator function is equal to one at each vertex $\xx_m$ of the FE mesh which satisfies $\xx_m \in \Gamma(\uu)$ (colored vertices in the figures) and is equal to zero at all other vertices.
In Figure~\ref{fig:results_SA_hsic_sobol_1000} we plot three different observation windows used in the simulations, namely $\mcX_\text{obs}=[0.0, 0.1] \times [0.0, 0.5]$ in the left column, $\mcX_\text{obs}=[0.0, 0.3] \times [0.165, 0.33]$ in the middle column, and $\mcX_\text{obs}=\mcX$ in the right column.
{The observation window can be chosen according to the physical area of interest.}

\subsection{Numerical results} \label{subsection:numerical_results}

First we plot some realizations of the temperature field and show the indicator function of the corresponding feasible sets. 
Next we study the HSIC-ANOVA and SpIn Sobol' indices for different observation windows $\mcX_\text{obs}$ in the spatial domain $\mcX$.

\subsubsection{Temperature field and indicator function} \label{subsubsection:temp_field_and_indicator_functions}
In the left panel of each of the Figures~\ref{fig:temp_and_indicator_47}, \ref{fig:temp_and_indicator_48}, and \ref{fig:temp_and_indicator_ab}, we plot the temperature field for a specific realization of the random input parameters of the CDR model problem. 
The right panel of each of these figures plots the indicator function of the feasible set approximated in the FEM basis.
If the vertex in the FE mesh is contained in the feasible set, then this vertex is highlighted by a color, otherwise the vertex is colored in white.
We observe that the indicator function resp. the feasible set can vary substantially depending on the input parameters. 
For example, in Figure~\ref{fig:temp_and_indicator_47} the feasible set occupies most of the spatial domain because the input parameters result in a relatively cold flame. 
On the other hand, Figure~\ref{fig:temp_and_indicator_ab} corresponds to a relatively hot flame, and thus the feasible set occupies a much smaller region in the spatial domain. 

\newcommand{\twocolumnlength}{0.45\textwidth}
\begin{figure}[!htb]
    \centering
    \begin{subfigure}[b]{\twocolumnlength}%
        \centering
        \includegraphics[width=\linewidth]{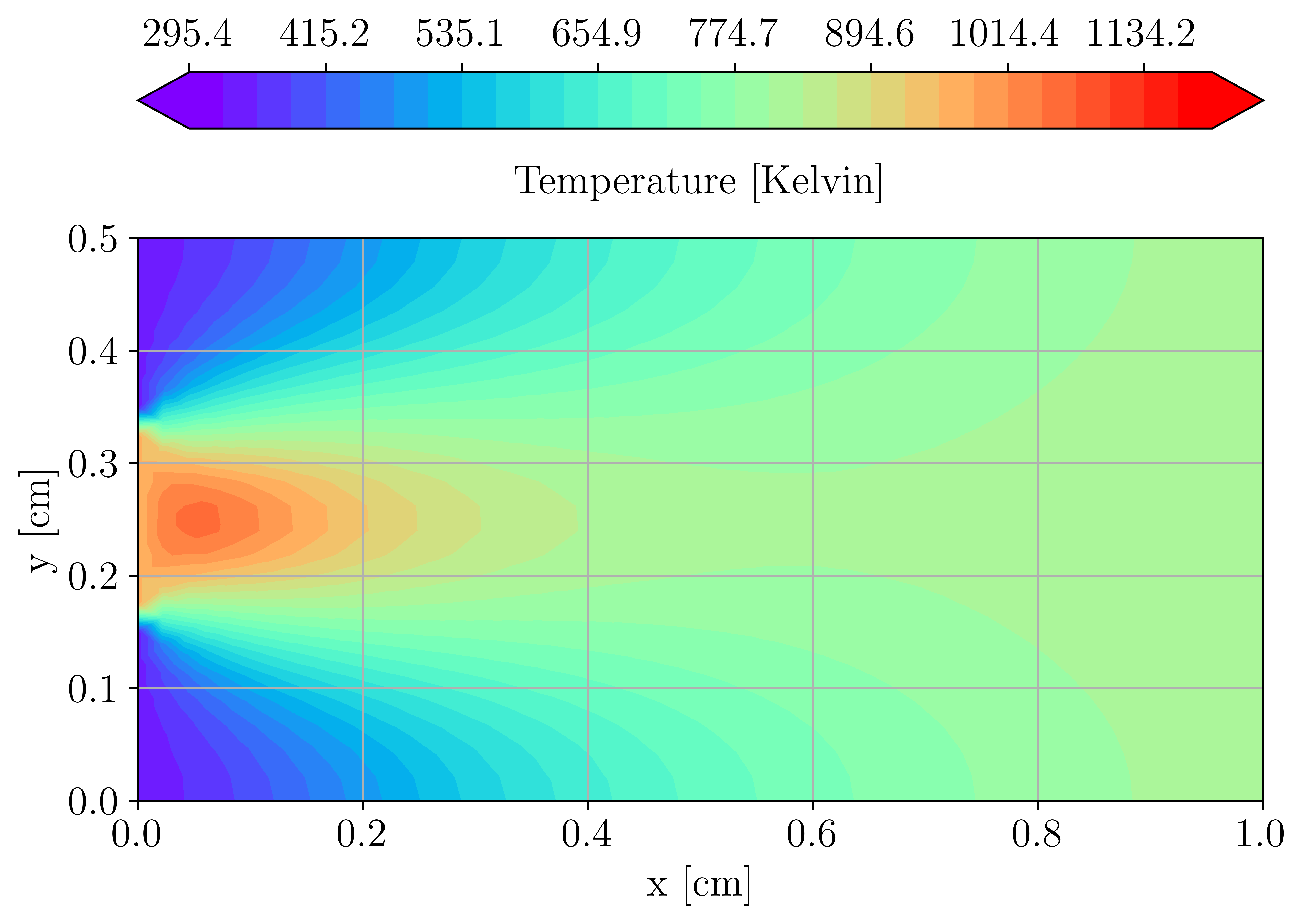}
    \end{subfigure}%
    \hfill
    \begin{subfigure}[b]{\twocolumnlength}%
        \centering
        \includegraphics[width=\linewidth]{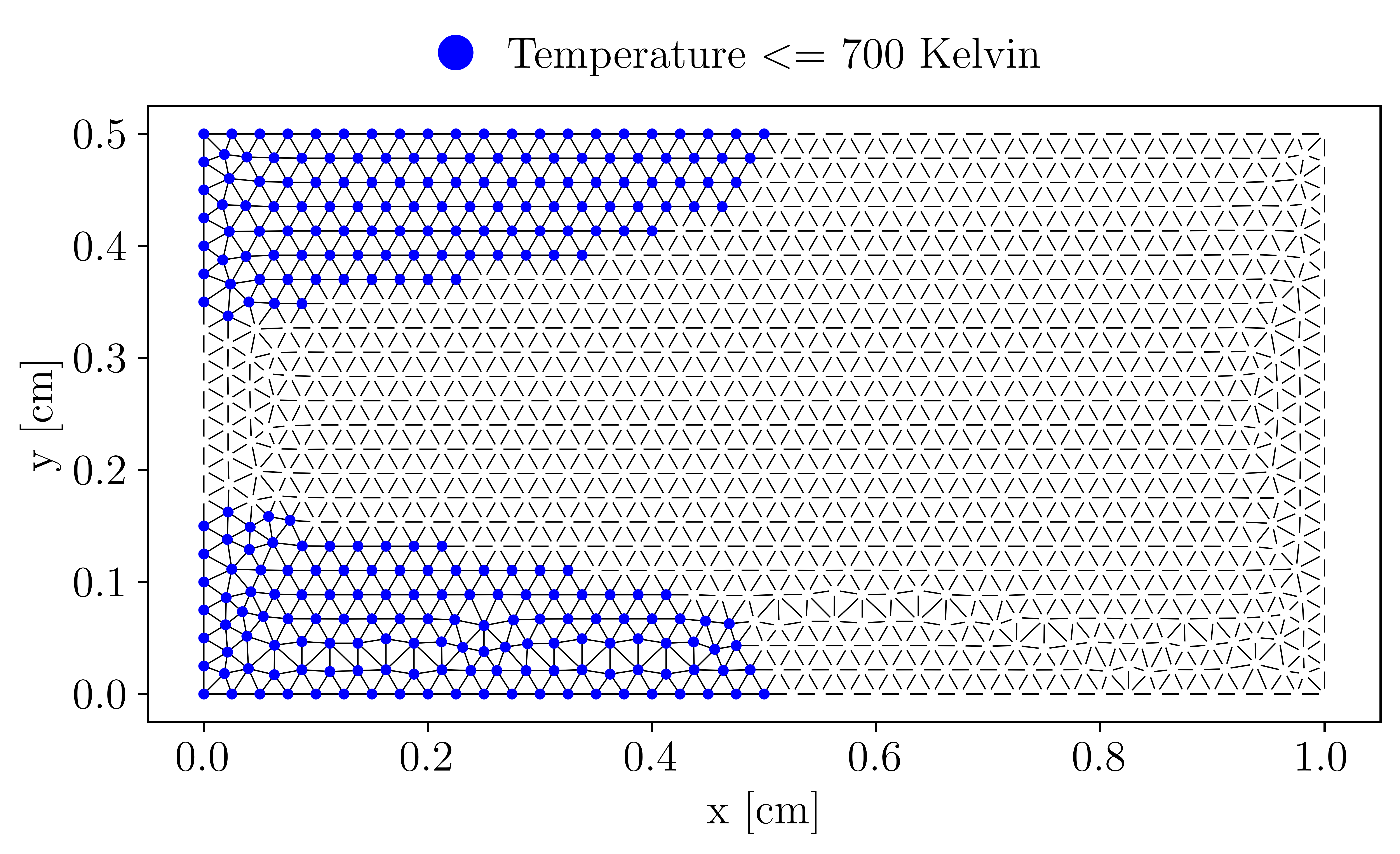}
    \end{subfigure}%
    \caption{Temperature field (left) and associated indicator function (right) of the feasible set for $t_\text{obs}=0.05$ [s], $T_\text{crit}=700$ [K] and the input parameters $(A,E,T_i,T_o,\phi)=(5.8134 \times 10^{11}, 4.4688 \times 10^{3},9.6086 \times 10^{2}, 3.3876\times 10^2, 1.2222\times 10^0)$. Visualization with the package Matplotlib \cite{matplotlib}.}
    \label{fig:temp_and_indicator_47}
\end{figure}
\begin{figure}[!htb]
    \centering

    \begin{subfigure}[b]{\twocolumnlength}%
        \centering
        \includegraphics[width=\linewidth]{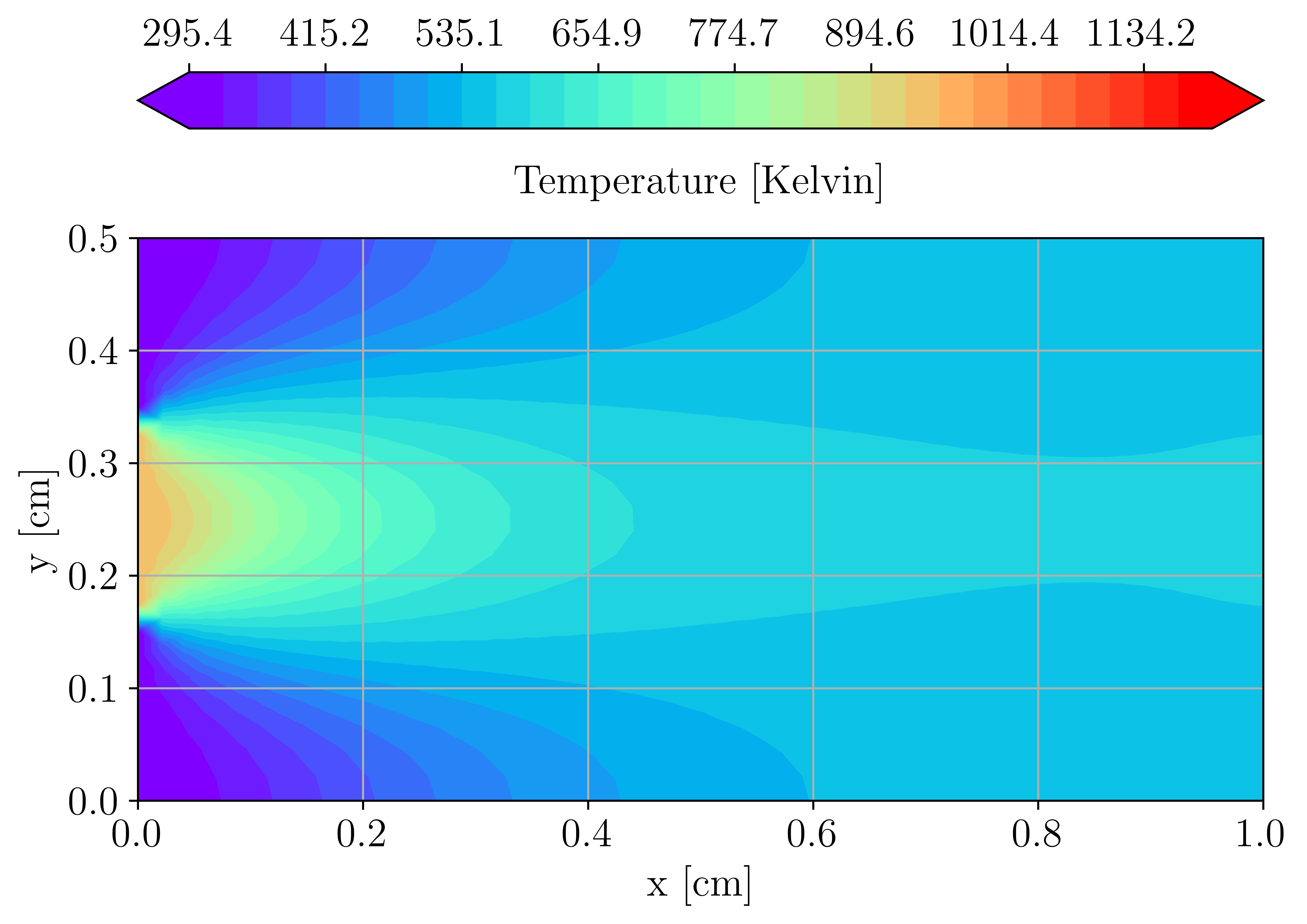}
    \end{subfigure}%
    \hfill
    \begin{subfigure}[b]{\twocolumnlength}%
        \centering
        \includegraphics[width=\linewidth]{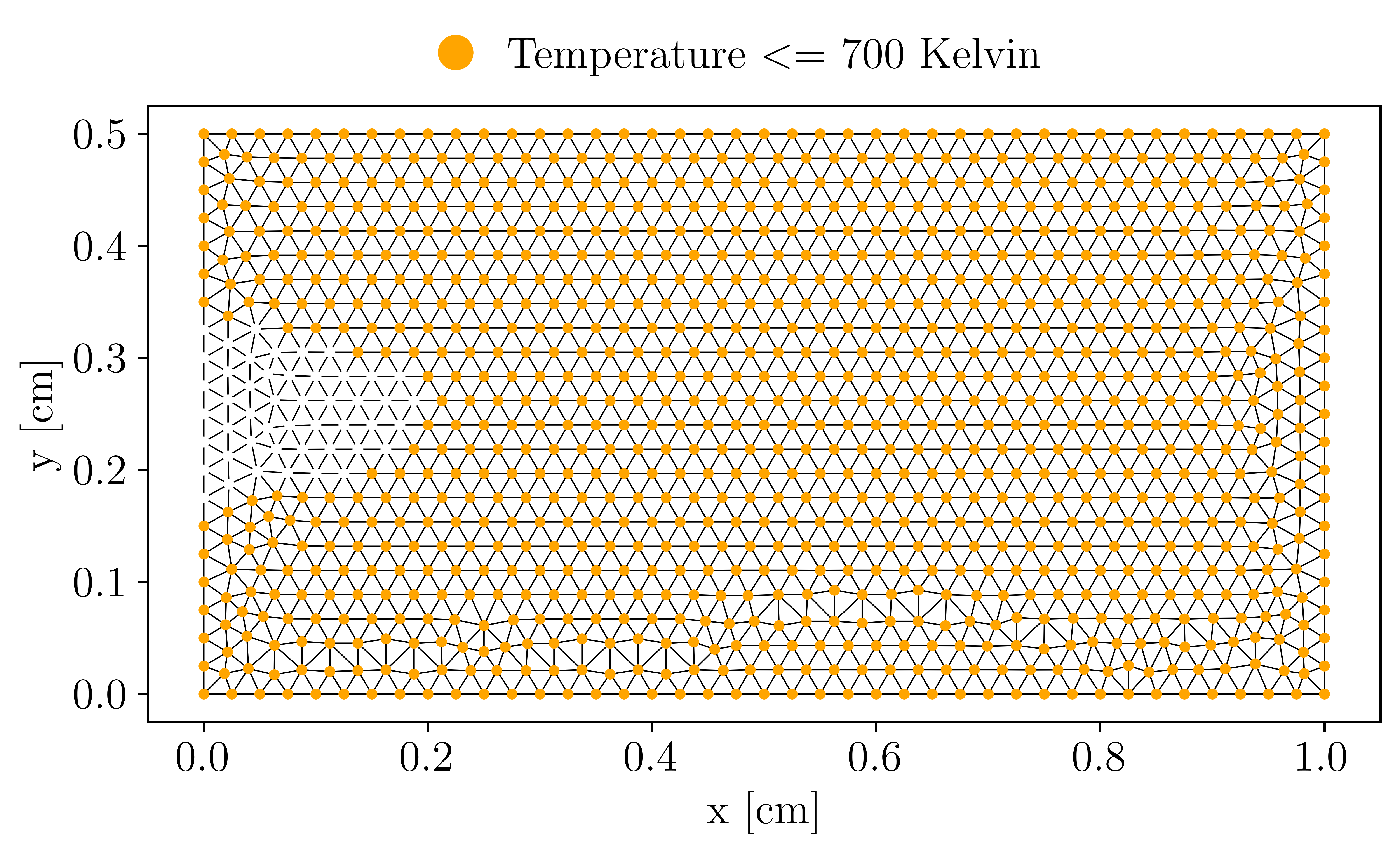}
    \end{subfigure}%
    
    \caption{Temperature field (left) and associated indicator function (right) of the feasible set for $t_\text{obs}=0.05$ [s] seconds, $T_\text{crit}=700$ [K] and the input parameters $(A,E,T_i,T_o,\phi)=(6.1999 \times 10^{11}, 8.3363 \times 10^{3}, 9.5296 \times 10^{2}, 2.9541 \times 10^{2}, 5.7396 \times 10^{-1})$. Visualization with the package Matplotlib \cite{matplotlib}.}
    \label{fig:temp_and_indicator_48}
\end{figure}
\begin{figure}[!htb]
    \centering
    \begin{subfigure}[b]{\twocolumnlength}%
        \centering
        \includegraphics[width=\linewidth]{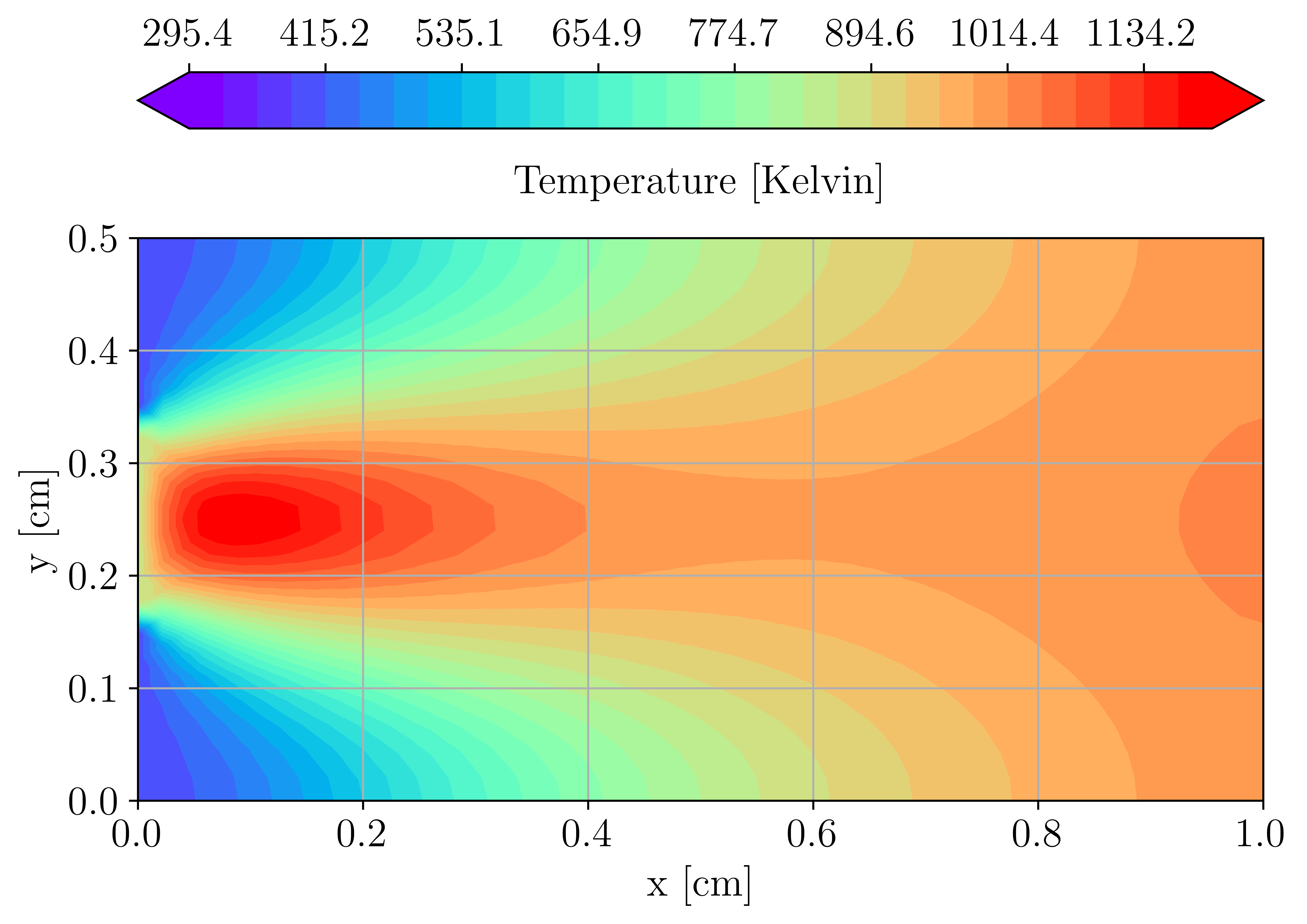}
    \end{subfigure}%
    \hfill
    \begin{subfigure}[b]{\twocolumnlength}%
        \centering
        \includegraphics[width=\linewidth]{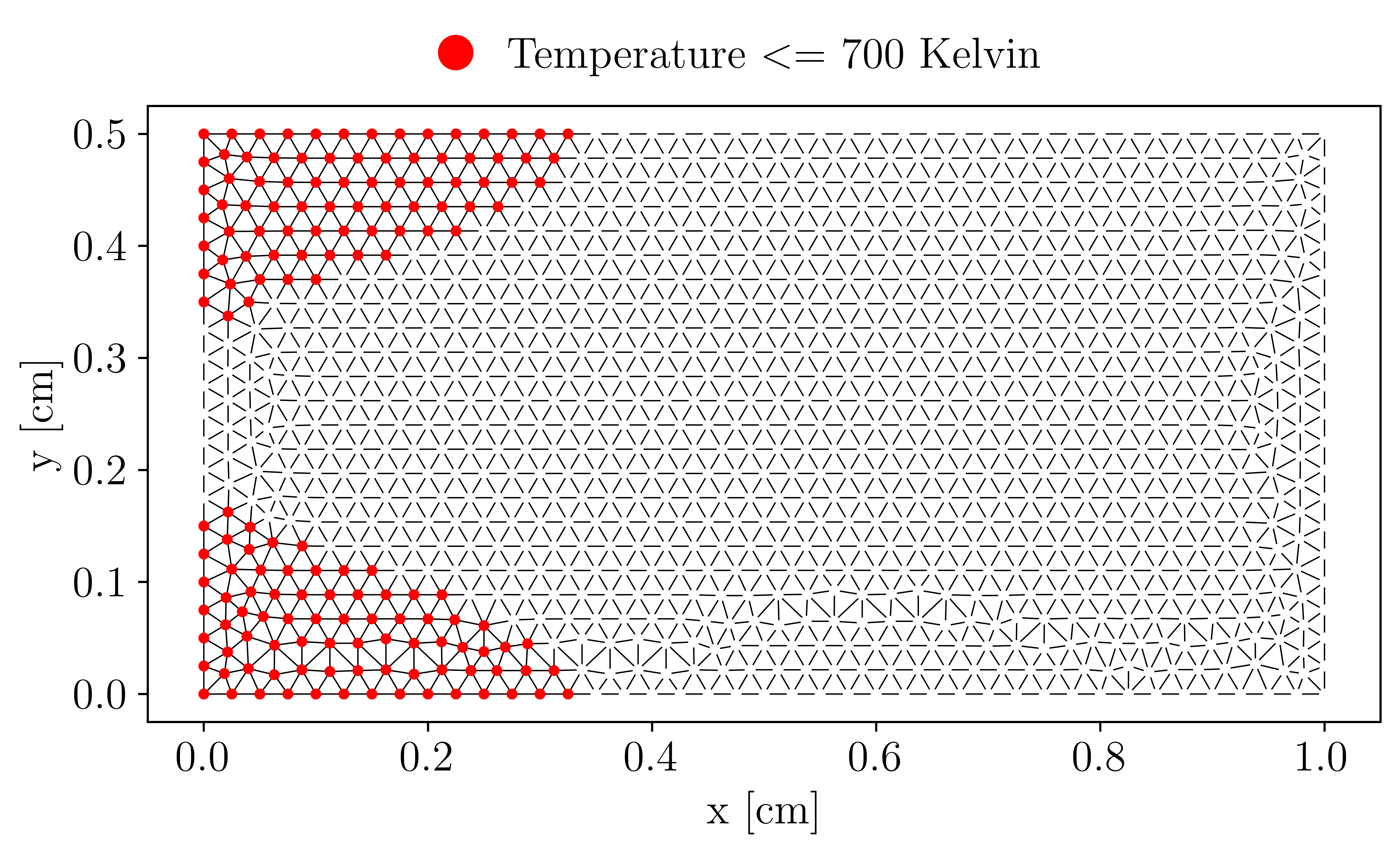}
    \end{subfigure}%
    \caption{Temperature field (left) and associated indicator function (right) for $t_\text{obs}=0.05$ [s], $T_\text{crit}=700$ [K] and the input parameters $(A,E,T_i,T_o,\phi)=(1.4468 \times 10^{12}, 6.4137 \times 10^{3}, 8.5991 \times 10^{2}, 3.9198 \times 10^{2}, 1.3509 \times 10^{0})$. Visualization with the package Matplotlib \cite{matplotlib}.}
    \label{fig:temp_and_indicator_ab}
\end{figure}

\newpage
\subsubsection{Sensitivity indices for different observation windows}
Recall that both the HSIC-ANOVA indices and the SpIn Sobol' indices are defined in terms of volume integrals of the indicator functions of the feasible sets over the spatial domain $\mcX$.
Thus, the extent and location of the feasible sets plays an important role for both indices.
Moreover, choosing an observation window $\mcX_\text{obs} \subset \mcX$ effectively restricts the volume integrals to the integration domain $\mcX_\text{obs}$.
The wide variability of the feasible sets across the spatial domain which we observed in the previous subsection leads us to anticipate that the sensitivity indices depend on the chosen observation window.

This dependence is non-trivial: Recall that $S_A^\text{spin}$ in \eqref{SA:spin} is defined as \textit{ratio} of two volume integrals over the spatial domain $\mcX$. Each integrand is non-negative, thus the numerator and denominator are monotonic functions with respect to inclusion of the integration domain. However, the ratio of monotonic functions is not necessarily monotonic. Thus the dependence of the SpIn Sobol' indices on the size and location of the observation window is not easily foreseen.
The situation for the HSIC-ANOVA indices is not straightforward either.
First of all, these indices are again defined as ratios of non-negative quantities, see \eqref{definition:hsic_anova_index}. 
Second, each of the integrands is not merely a volume integral over $\mcX$, see \eqref{equation:hsic_estimation}. Only the set kernel $k_\text{set}$ defined in \eqref{equation:set_kernel} involves a volume integral with a non-negative integrand. 
Notably, the function $k_\text{set}$ is monotonically decreasing when the volume of the integration domain increases.
Overall, it is difficult to make general conclusions about the dependence of the HSIC-ANOVA indices on the observation window.

To estimate the sensitivity indices and to carry out a statistical analysis of the estimates, we perform 10 repeated estimations of the SpIn Sobol' indices using 1,000 data points, and 10 repeated estimations of the HSIC-ANOVA indices using 1,000 and 10,000 data points, respectively. We perform these estimates for different observation windows $\mcX_\text{obs}$ in \eqref{equation:excursion_set_for_cdr_temp_output}.
In Figure~\ref{fig:results_SA_hsic_sobol_1000} we present the results for $\mcX_\text{obs}=[0.0,0.1]\times[0.0,0.5]$ in the left column, for $\mcX_\text{obs}=[0.0,0.3]\times[0.165,0.33]$ in the middle column, and for the full domain $\mcX_\text{obs}=\mcX=[0.0,0.5]\times[0.0,1.0]$ in the right column. 

\begin{figure}[!hptb]
    \centering
    \includegraphics[width=\linewidth]{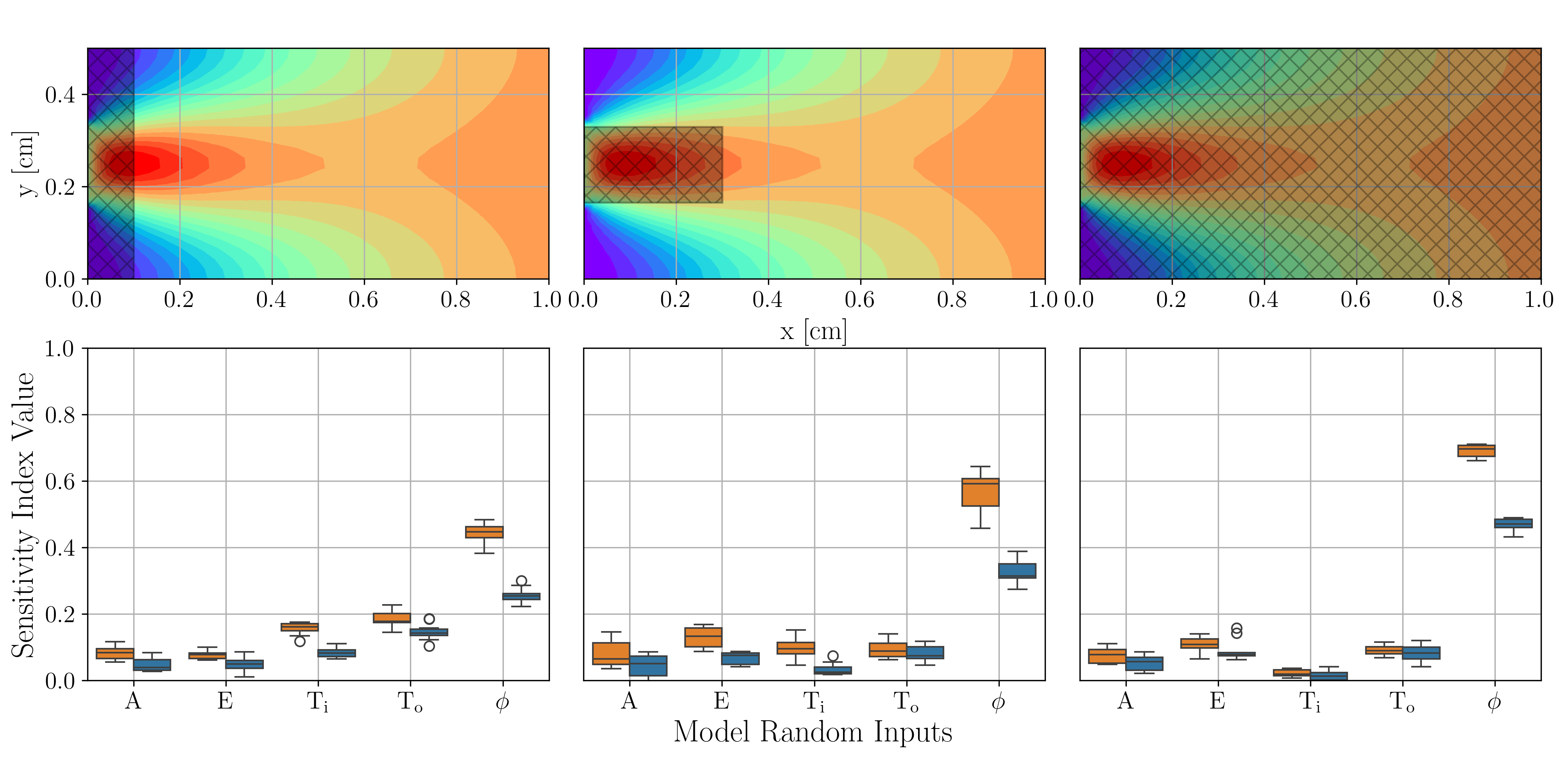}
    \caption{Box plots of estimated spatially-integrated \textbf{\textcolor{tabblue}{Sobol}'} sensitivity indices (blue) and \textbf{\textcolor{taborange}{HSIC-ANOVA}} indices (orange) for the temperature field of the CDR problem of Equation \eqref{eqn:experims_models_cdr_pde}, repeated 10 times for 1000 data points each within the respective sub-domain on the top row. Visualized using the visualization packages Matplotlib \cite{matplotlib} and Seaborn \cite{seaborn}.}
    \label{fig:results_SA_hsic_sobol_1000}
\end{figure}

First, we notice that in each observation window the relative importance of the random inputs of the CDR model estimated by both the spatially-integrated Sobol' indices and the HSIC-ANOVA indices is similar. 
For example, for $\mcX_\text{obs}=\mcX$, the fuel-to-oxidizer ratio $\phi$ is the most important, followed by the boundary temperature $T_o$ and the activation energy $E$, and finally we have the pre-exponential factor $A$ and the inflow temperature  $T_i$. 
However, the absolute values of the indices are different, with $\phi$ having an average index value of about $0.7$ for the HSIC-ANOVA, and $0.5$ for the SpIn Sobol' index. The qualitative similarity between the indices points to the understanding that the SpIn Sobol' and HSIC-ANOVA indices capture the same information and input-output relationship in this example.

Next we observe that, depending on where we draw the observation window in the spatial domain, the relative importance of the input parameters is noticeably different. The exception is the fuel-to-oxidizer ratio $\phi$, which is the most important input parameter regardless of the observation window. This is consistent with the fact that $\phi$ determines how much fuel is available at the inlet to burn.
For the other inputs, however, we observe an importance-flipping behavior. 
For example, if we observe the entire spatial domain, the sensitivity indices estimate $T_o$ and $E$ appear to be about equally important, followed by the pre-exponential factor $A$ and the inflow-boundary temperature $T_i$, see Figure~\ref{fig:results_SA_hsic_sobol_1000} on the right.
If the observation window is a small section near the left vertical boundary, then this raises the importance of $T_o$ and $T_i$ as second and third most important, while reducing the importance of the reaction coefficients $E$ and $A$ to 4th-most and least important, see Figure~\ref{fig:results_SA_hsic_sobol_1000} on the left.

The variability in the box plots of Figure~\ref{fig:results_SA_hsic_sobol_1000} makes it difficult to compare the relative importance of inputs in some cases. 
In the middle plot of Figure~\ref{fig:results_SA_hsic_sobol_1000}, the box plot whiskers of all but $\phi$ overlap with the boxes of other inputs. 
To reduce this variability we use more data points for the esimation of the HSIC-ANOVA indices.
As discussed in Section~\ref{section:estimation_of_sensitivity_indices}, the estimation of the first-order SpIn Sobol' indices requires many more evaluations of the process model, hence we focus on the HSIC-ANOVA indices. 
We perform 10 repeats of the HSIC-ANOVA estimation using 10,000 data points each, for the same observation windows as in Figure~\ref{fig:results_SA_hsic_sobol_1000}.  
The results are presented in Figure~\ref{fig:results_SA_hsic_10000}. 
\begin{figure}[!hptb]
    \centering
    \includegraphics[width=\linewidth]{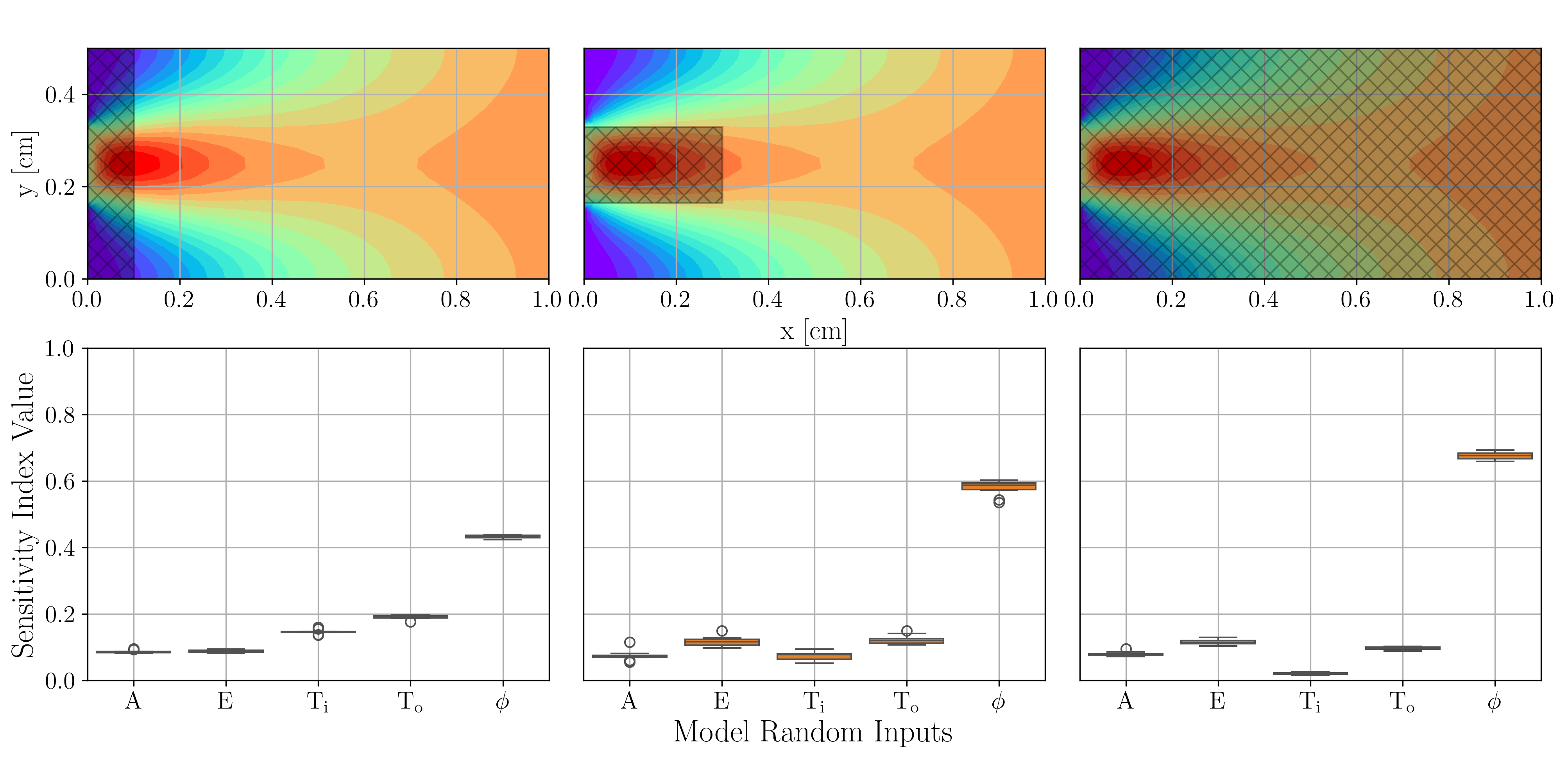}
    \caption{Box plots of estimated \textbf{\textcolor{taborange}{HSIC-ANOVA}} indices for the temperature field of the CDR problem of Equation \eqref{eqn:experims_models_cdr_pde}, repeated 10 times for 10000 data points each within the respective sub-domain on the top row. Visualized using the visualization packages Matplotlib \cite{matplotlib} and Seaborn \cite{seaborn}.}
    \label{fig:results_SA_hsic_10000}
\end{figure}
Comparing Figures~\ref{fig:results_SA_hsic_sobol_1000} and \ref{fig:results_SA_hsic_10000} we see that the relative positions in the importance profiles are the same, and the average value in each box plot is similar.
Moreover, the variability of each box plot is reduced when using more data points.
This allows us to compare the estimated importance profiles of the inputs with more confidence. For example, when we study the temperature field's sensitivity in passing the 700 Kelvin threshold in the observation window in the middle plot of Figure~\ref{fig:results_SA_hsic_10000}, the parameters $T_o$ and $E$ are found to be similarly important after the most important input $\phi$, while $T_i$ and $A$ are found to be similar and the least important of all inputs. The left-most plot in Figure~\ref{fig:results_SA_hsic_10000} tells us that in the observation window close to the left vertical boundary, the left-boundary temperatures $T_i$ and $T_o$ are the most important after $\phi$. 
Finally, if we observe the complete spatial domain, $T_i$ is estimated to be the least important, and $E$ and $T_o$ the second most important inputs.

\section{Conclusion}\label{section:conclusion}
 
In this work we performed a global, statistical sensitivity analysis for process models with set-valued outputs.
We studied two sensitivity measures: the HSIC-ANOVA indices for set-valued outputs \cite{fellmann_kernelSA} which work with kernel mean embeddings of unconditional measures on the input-output space of the model, and generalized, spatially integrated Sobol' indices \cite{gamboa_SA_for_multiDim_and_funtional_outputs} for function-valued outputs which are associated with (conditional) measures on the output space of the model.
We proposed efficient estimators for both indices for FEM-based numerical models, thereby improving existing estimators in the literature. 
Moreover, we developed a Python-based open-source code for both HSIC-ANOVA and spatially-integrated Sobol' indices which can be coupled with FEM-based numerical models.
The datasets and code associated with the simulations in this work are available in public repositories.

We used the indices to rank the random input parameters of a hydrogen combustion model, where the temperature in the combustion domain remains below a given threshold. 
We found that in this example the HSIC-ANOVA indices and the spatially-integrated Sobol' indices give qualitatively similar rankings.
In particular, the fuel-to-oxidizer ratio is consistently ranked as the most import input variable. 
We also found that the importance ranking of the model inputs changes and in some cases ``flips'' depending on where we place the observation window in the spatial domain.
This tells us that different dimension-reduced order models may be needed depending on the spatial location. 
Since the HSIC-ANOVA indices are relatively cheap to estimate, they might be the preferable choice in practical applications, where the numerical solver is expensive and the computational resources are limited.

There are many topics for further investigations, for example, a comparison with the kernel-based indices by Barr and Rabitz \cite{Barr:2022} or the study of more complex constraints in the definition of the spatial sets which may depend on time or the PDE solution.
The kernel-based sensitivity analysis for correlated inputs is relatively unexplored with some recent ideas presented by Larsen and Alexanderian for first-order and total HSIC indices \cite{2026TotalHSICwithDependentInputs}.
Moreover, the choice of the input and output kernels in the HSIC-ANOVA indices requires further mathematical analysis.

\section*{Acknowledgments}
The authors thank Jules Pertinand for useful comments and discussions. CP acknowledges the support of the TUM International Graduate School of Science and Engineering through the project DUCS.

\section*{Data Availability}
The code used in this work is publicly available at \url{https://github.com/Farbodch/sevosa}. 
The dataset of all simulations used to obtain the results discussed here is available at \url{https://doi.org/10.5281/zenodo.21771890}.

\section*{Declaration of AI use}
{Google's Gemini 3.5 Flash was used to identify sources for the theory on Lebesgue-measurable sets and BV functions, and for proof-checking. The authors assume responsibility for all theoretical and numerical results in this manuscript.} 
The authors used OpenAI's ChatGPT (models GPT-5, GPT-5.1, GPT-5.2, and GPT-5.4) as an auxiliary software-development aid. The tool was used to provide explanations of submitted error messages and to assist in identifying potentially relevant external software packages and understanding the functionality and usage of package methods. In some cases, ChatGPT produced short illustrative code snippets showing how an external package function or method might be called or configured to achieve a specified behaviour. These snippets were used solely as explanatory examples for understanding package interfaces and functionality. They were not copied into, adapted for, or otherwise incorporated into the published codebase. After consulting such examples, the authors independently implemented the required functionality, and all resulting code was written, reviewed, tested, and verified by the authors. The open-source codebase used to generate the data, analyses, and results reported in this paper contains no AI-generated code. 
{The authors assume responsibility for the computational implementation and all results reported in the paper.}

\appendix 

\section{Supplementary Material}

\definecolor{AlgorithmNodeFill}{HTML}{EDF2F7}
\definecolor{AlgorithmNodeText}{HTML}{365A80}
\definecolor{DataFileFill}{HTML}{E8AEA2}

\tikzset{
    spinSob node/.style={
        rectangle,
        minimum width=17mm, 
        minimum height=7mm, 
        inner xsep=2pt, 
        inner ysep=1pt,
        draw=black,
        line width=0.7pt,
        fill=AlgorithmNodeFill,
        text=tabblue,
        font=\small\bfseries,
        align=center
    },
    hsic node/.style={
        rectangle,
        minimum width=17mm, 
        minimum height=7mm, 
        inner xsep=2pt, 
        inner ysep=1pt,
        draw=black,
        line width=0.7pt,
        fill=AlgorithmNodeFill,
        text=taborange,
        font=\small\bfseries,
        align=center
    },
    data file node/.style={ 
        rectangle,
        rounded corners=1.5pt,
        minimum width=17mm, 
        minimum height=7mm, 
        inner xsep=2pt, 
        inner ysep=1pt,
        draw=black, 
        line width=0.7pt, 
        fill=DataFileFill, 
        text=black, 
        font=\small\bfseries, 
        align=center
    },
    algorithm edge/.style={
        draw=black,
        line width=0.7pt,
        -{Stealth[length=1.8mm, width=1.3mm]}
    },
    data edge/.style={
        draw=black,
        dashed,
        dash pattern=on 3pt off 2pt,
        line width=0.7pt,
        -{Stealth[length=1.8mm, width=1.3mm]}
    },
}

\newcommand{\SpInSobAlgorithmMap}{
    \begin{tikzpicture}[baseline=(current bounding box.north)]
        \node[spinSob node] (dataGenSob) at (-1.3, 0)
            {Alg.\\\ref{SM_algorithm:data_generation_sobol}};
        \node[data file node] (dataFileSob) at (-1.3, -1.7)
            {Data File};
        
        \node[spinSob node] (alg1) at (1.0, 0)
            {Alg.\\\ref{SM_algorithm:1_SpInSobol_main}};
        \node[spinSob node] (alg2) at (1.0, -1.7)
            {Alg.\\\ref{SM_algorithm:2_SpInSobol_integrated_spatial_cov}};
            
        \draw[algorithm edge]
            (alg1) -- (alg2);
        \draw[data edge]
            (dataGenSob) -- (dataFileSob);
        \draw[data edge]
            (dataFileSob) -- (alg1);
    \end{tikzpicture}
}

\newcommand{\HSICAlgorithmMap}{
    \begin{tikzpicture}[baseline=(current bounding box.north)]
        \node[hsic node] (dataGenHSIC) at (-2.5, 0)
            {Alg.\\\ref{SM_algorithm:data_generation_hsic}};
        \node[data file node] (dataFileHSIC) at (-2.5, -1.7)
            {Data File};
        
        \node[hsic node] (alg3) at (0.5,0)
            {Alg.\\\ref{SM_algorithm:3_HSIC_main}};
        \node[hsic node] (alg4) at (-0.5, -1.7)
            {Alg.\\\ref{SM_algorithm:4_HSIC_gamma_matrix}};
        \node[hsic node] (alg5) at (-0.5, -3.4)
            {Alg.\\\ref{SM_algorithm:5_HSIC_spatially_integrated_symm_diff}};
        \node[hsic node] (alg6) at (1.5, -1.7)
            {Alg.\\\ref{SM_algorithm:6_HSIC_computing_input_kernel_matrix}};
        \node[hsic node] (alg7) at (1.5, -3.4)
            {Alg.\\\ref{SM_algorithm:7_HSIC_order_1_anova_input_kernel}};
        
        \draw[algorithm edge]
            (alg3) -- (alg4);
        \draw[algorithm edge]
            (alg3) -- (alg6);
        \draw[algorithm edge]
            (alg4) -- (alg5);
        \draw[algorithm edge]
            (alg6) -- (alg7);

        \draw[data edge]
            (dataGenHSIC) -- (dataFileHSIC);
        \draw[data edge]
            (dataFileHSIC) -- (alg3);
    \end{tikzpicture}
}

The supplementary materials contain the algorithms and routines for estimating the spatially-integrated (SpIn) Sobol' sensitivity indices (Section~\ref{sm:sec:spin}),
the HSIC-ANOVA indices (Section~\ref{sm:sec:hsic}), and the data generation (Section~\ref{sm:sec:data}).
The specific configurations for the CDR test problem are recorded in Section~\ref{sec:CDR specific}.
The dependency map of the algorithms is shown in Figure \ref{SM_fig:algorithm_dependency_maps}.

\begin{figure}[htp]
    \centering
    
    \begin{subfigure}[t]{0.32\textwidth}
        \vspace{0pt}
        \centering
        \SpInSobAlgorithmMap
    \end{subfigure} %
    \hfill
    \begin{subfigure}[t]{0.64\textwidth}
        \vspace{0pt}
        \centering
        \HSICAlgorithmMap
    \end{subfigure}
    
    \caption{Dependency map of the algorithms for the data generation and estimation of the \textcolor{tabblue}{spatially-integrated Sobol'} index (left column) and the  \textcolor{taborange}{HSIC-ANOVA} index (right column). The dashed arrows point from the source to the target. 
    Solid arrows point from the caller function to the called function.}
    \label{SM_fig:algorithm_dependency_maps}
\end{figure}

\subsection{Spatially-integrated Sobol' indices}\label{sm:sec:spin} 
The estimation is detailed in Algorithms \ref{SM_algorithm:1_SpInSobol_main} and \ref{SM_algorithm:2_SpInSobol_integrated_spatial_cov}.

\begin{algorithmenv} \label{SM_algorithm:1_SpInSobol_main}
    \caption{SpIn Sobol' Sensitivity Index}
    \begin{algorithmic}[1]
        \Require number of samples $N$, 
        \Statex \hspace{\algorithmicindent}
            number of random inputs $d$,
        \Statex \hspace{\algorithmicindent}
            \texttt{data-directory},
            \texttt{mesh-directory},
        \Statex \hspace{\algorithmicindent}
            \texttt{g-constraint},
            \texttt{test-domain},
        \Statex \hspace{\algorithmicindent}
            \texttt{index-order-numbers},
            \texttt{total-sobols-flag}.
        
        \Statex
        \State \texttt{index-set} $\gets$ generate $\{A \in \mathcal{P}_d : |A| \in \texttt{index-order-numbers}\}$
        \If{\texttt{total-Sobols-flag} is True}
            \State \texttt{index-set} $\gets$ generate $\{A \in \mathcal{P}_d : |A| = d-1\}$
        \EndIf
        
        \Statex
        \State $X_h \gets$ load FEM mesh from \texttt{mesh-directory}
        \State $X_h^t \gets$ $\{\text{cell } C \in X_h : \text{all vertices of } C \text{ lie in \texttt{test-domain}}\}$    
        \State $\texttt{M} \gets$ assemble mass matrix from $X_h^t$
        
        \Statex
        \AlgAssign{\texttt{g-I-directories}}{
            load $N$ directories for FEM
            solutions $g_I$ from \texttt{data-directory}
        }
            
        \For{$A \in \texttt{index-set}$}
            \AlgAssign{\texttt{data-directory-A}}{set the appropriate sub-folder directory to index $A$ data inside \texttt{data-directory}}
            \AlgAssign{ \texttt{g-A-II-directories}}{
                load $N$ directories for FEM solutions $g_{A,II}$ from \texttt{data-directory-A}
            }
            
            \AlgAssign{\texttt{g-A-tilde-directories}}{
                load $N$ directories for FEM solutions $g_{A,\sim}$ from \texttt{data-directory-A}
            }
            
        \EndFor

        \Statex
        \For{\texttt{g-I-directory} in \texttt{g-I-directories}}
            \State load $g_I(x)$ from \texttt{g-I-directory} to memory
            \AlgAssign{\texttt{indicators-I-list}}{
                compute indicator function on $g_I(x)$, $I_I(x) \coloneqq \mathds{1}_{\{g_I(x) \leq \texttt{g-constraint}\}}$
            }
        \EndFor
        
        \Statex
        \AlgAssign{\texttt{cov-I}}{
            call \texttt{integrated-spatial-cov} and pass in 
            (\texttt{indicators-I-list}, \texttt{indicators-I-list}, \texttt{M})    
        }
        \For{$A \in \texttt{index-set}$}
            \For{\texttt{g-A-II-directory} in \texttt{g-A-II-directories}}
                \State load $g_{A,II}(x)$ from \texttt{g-A-II-directory}
                \AlgAssign{\texttt{indicators-A-II-list}}{compute indicator function on $g_{A,II}(x)$, $I_{A,II}(x) \coloneqq \mathds{1}_{\{g_{A,II}(x) \leq g\text{-constraint}\}}$}
            \EndFor
            \For{\texttt{g-A-tilde-directory} in \texttt{g-A-tilde-directories}}
                \State load $g_{A,\sim}(x)$ from \texttt{g-A-tilde-directory}
                \AlgAssign{\texttt{indicators-A-tilde-list}}{compute indicator function on $g_{A,\sim}(x)$, $I_{A,\sim}(x) \coloneqq \mathds{1}_{\{g_{A,\sim}(x) \leq g\text{-constraint}\}}$}
            \EndFor
            \AlgAssign{\texttt{cov-A-list}}{
                call \texttt{integrated-spatial-cov} and pass in
                (\texttt{indicators-A-II-list}, \texttt{indicators-A-tilde-list}, \texttt{M})
            }
        \EndFor
        
        \Statex
        \For{$A \in \texttt{index-set}$}
            \State $\texttt{cov-A}$ $\gets$ \texttt{cov-A-list}
            \If{\texttt{total-sobols-flag} is True \textbf{and} $|A| = d-1$}
                \State
                \begin{equation*}
                    S_{A}^{\mathrm{total}} = 1 - \frac{\texttt{Cov-A}}{\texttt{Cov-I}}
                \end{equation*}
                \State \texttt{index-results} $\gets$ $S_{A}^{\mathrm{total}}$
            \Else
                \State
                \begin{equation*}
                    S_A = \frac{\texttt{Cov-A}}{\texttt{Cov-I}}
                \end{equation*}
                \State \texttt{index-results} $\gets$ $S_A$
            \EndIf
        \EndFor
        \State \Return \texttt{index-results}
    \end{algorithmic}
\end{algorithmenv}

\begin{algorithmenv} \label{SM_algorithm:2_SpInSobol_integrated_spatial_cov}
    \caption{\texttt{integrated-spatial-cov}}
    \begin{algorithmic}[1]
        \Require \texttt{FEM-functions-list-I}, \texttt{ FEM-functions-list-II}, mass matrix\texttt{ M}.

        \Statex
        \State $N \gets$ \texttt{sampling-number} (from length of \texttt{FEM-functions-list-I})
        \State \texttt{FEM-{\bf c}-I-list} $\gets$ coefficients in \texttt{FEM-functions-list-I}
        \State \texttt{FEM-{\bf c}-II-list} $\gets$ coefficients in \texttt{FEM-functions-list-II}
        \State $\bar{\bm{c}}_I = \frac{1}{N}\sum_{c_I\in\texttt{FEM-{\bf c}-I-list}}c_I$
        \State $\bar{\bm{c}}_{II} = \frac{1}{N}\sum_{c_{II}\in\texttt{FEM-{\bf c}-II-list}}c_{II}$
        \State Compute
            \begin{equation*}
                \texttt{spatial-cov} \gets \frac{1}{N}\sum_{i=1}^N{(c_I^{(i)})}^T\texttt{M}c_{II}^{(i)} - {(\bar{\bm{c}}_{I})}^T\texttt{M}\bar{\bm{c}}_{II}, \quad \bm{c}_{I/II}^{(i)}\in \texttt{FEM-{\bf c}-I/II-list}.
            \end{equation*}
        \State \Return \texttt{spatial-cov}
    \end{algorithmic}
\end{algorithmenv}

\subsection{HSIC-ANOVA indices}\label{sm:sec:hsic} 

The FEM optimized estimation of the HSIC-ANOVA indices is detailed in Algorithms \ref{SM_algorithm:3_HSIC_main}, \ref{SM_algorithm:4_HSIC_gamma_matrix}, \ref{SM_algorithm:5_HSIC_spatially_integrated_symm_diff}, \ref{SM_algorithm:6_HSIC_computing_input_kernel_matrix}, and \ref{SM_algorithm:7_HSIC_order_1_anova_input_kernel}. 

\begin{algorithmenv} \label{SM_algorithm:3_HSIC_main}
    \caption{FEM-optimized Estimation of the HSIC Sensitivity Index}
    \begin{algorithmic}[1]
        \Require \text{sampling number $N$},
            \texttt{mesh-directory},
        \Statex \hspace{\algorithmicindent}
            \texttt{data-directory}, 
            \texttt{input-domain-specifications},
        \Statex \hspace{\algorithmicindent}
            \texttt{g-constraint},
            \texttt{test-domain}.
        
        \Statex
        \State $d \gets$ number of random inputs (from \texttt{input-domain-specifications})  
        \State \texttt{index-set} $\gets$ generate $\{A \in \mathcal{P}_d : |A|=1 \text{ or } |A|=d\}$

        \Statex
        \State $X_h \gets$ load FEM mesh from \texttt{mesh-directory}
        \State $X_h^t \gets$ $\{\text{cell } C \in X_h : \text{all vertices of } C \text{ lie in \texttt{test-domain}}\}$    
        \State $\texttt{M} \gets$ assemble mass matrix from $X_h^t$
        
        \Statex
        \AlgAssign{\texttt{g-directories}}{load $N$ directories for FEM function solutions $g$ from \texttt{data-directory}}
        
        \AlgAssign{\texttt{u-input-data}}{
            load $N$ input data $\bm{u}$ ($\in\mathbb{R}^d$) from \texttt{data-directory} such that each $\bm{u}_i$ inside \texttt{u-input-data} corresponds to solution $\bm{g}_i$ inside \texttt{g-directories} for all $i\in\{1,\dots,N\}$
        }
        \Statex
        \For{\texttt{g-directory} in \texttt{g-directories}}
            \State load $\bm{g}(\bm{x})$ from \texttt{g-directory} to memory
            \AlgAssign{\texttt{indicators-list}}{compute indicator function on $\bm{g}(\bm{x})$, $I(\bm{x}) \coloneqq \mathds{1}_{\{\bm{g}(\bm{x}) \leq \texttt{g-constraint}\}}$}
        \EndFor
        
        \Statex
        \State $K_{\Gamma} \gets$ call \texttt{compute-gamma-matrix(indicators-list, M)}

        \Statex
        \AlgAssign{\texttt{u-input-data-transformed}}{
            transform input data $\bm{u}$ from the distribution it was sampled from to $\text{Uniform}([0,1])$, according to \texttt{input-domain-specifications} for each $\bm{u}$ in \texttt{u-input-data}
        }

        \Statex
        \For{index $A\in\texttt{index-set}$}
            \AlgAssign{$K_A$}{
                call \texttt{compute-input-kernel-matrix} and pass in
                ($A$, \texttt{u-input-data-transformed})
            } 
        \EndFor

        \Statex
        \For{index $A\in\texttt{index-set}$}
            \State 
                \begin{equation*}
                    {\texttt{HSIC}}_A \gets \frac{2}{N(N-1)}\sum_{j=2}^N\sum_{i=1}^{j-1}K^{(i,j)}_AK^{(i,j)}_{\Gamma},\quad K^{(i,j)}_A\in K_A, K^{(i,j)}_{\Gamma}\in K_{\Gamma}
                \end{equation*}
        \EndFor

        \Statex
        \State $\texttt{HSIC} \gets \texttt{HSIC}_A$ for $|A|=d$
        \For{index $A\in\{A\in\mathcal{P}_d:|A|=1\}$}
            \State 
                \begin{equation*}
                    S^{\texttt{HSIC}}_A \gets \frac{\texttt{HSIC}_A}{\texttt{HSIC}}
                \end{equation*}
            \State \texttt{index-results} $\gets$ $S^{\texttt{HSIC}}_A$
        \EndFor
        \State \Return \texttt{index-results}
    \end{algorithmic}
\end{algorithmenv}

\begin{algorithmenv} \label{SM_algorithm:4_HSIC_gamma_matrix}
    \caption{\texttt{compute-gamma-matrix}}
    \begin{algorithmic}[1]
        \Require \texttt{FEM-functions-list}, mass matrix \texttt{M}.

        \Statex
        \State $N \gets$ \texttt{sampling-number} (from length of \texttt{FEM-functions-list})
        \AlgAssign{$\Lambda$}{
            call \texttt{spatial-integrated-symmetric-difference} and pass in
            (\texttt{FEM-functions-list}, \texttt{M})
        }
        \State $\sigma^2\gets \frac{1}{N^2}\sum_{i,j=1}^N\Lambda^{(i,j)}$

        \For{$i,j\in{1,2,\dots,N}$}
            \If{$i\neq j$}
                \State $K_{\Gamma}^{(i,j)} \gets$ $\exp(-\frac{\Lambda^{(i,j)}}{2\sigma^2})$
            \Else
                \State $K_{\Gamma}^{(i,j)} \gets 0$ 
            \EndIf
        \EndFor
        \State \Return $K_{\Gamma}$
    \end{algorithmic}
\end{algorithmenv}

\begin{algorithmenv} \label{SM_algorithm:5_HSIC_spatially_integrated_symm_diff}
    \caption{\texttt{spatial-integrated-symmetric-difference}}
    \begin{algorithmic}[1]
        \Require \texttt{FEM-functions-list}, mass matrix \texttt{M}.

        \Statex
        \State $N \gets$ \texttt{sampling-number} (from length of \texttt{FEM-functions-list})
        \State \texttt{FEM-{\bf c}-list} $\gets$ FEM coefficient vectors of functions in \texttt{FEM-functions-list}
        \For{$i,j\in\{1,2,\dots,N\}$}
            \State 
            \begin{equation*}
                \Lambda^{(i,j)} \gets {(\bm{c}^{(i)}-\bm{c}^{(j)})}^T\texttt{M}{(\bm{c}^{(i)}-\bm{c}^{(j)})} \quad \bm{c}^{(i)},\bm{c}^{(j)}\in \texttt{FEM-{\bf c}-list}.
            \end{equation*}
        \EndFor 
        \State \Return matrix $\Lambda$
    \end{algorithmic}
\end{algorithmenv}

\begin{algorithmenv} \label{SM_algorithm:6_HSIC_computing_input_kernel_matrix}
    \caption{\texttt{compute-input-kernel-matrix}}
    \begin{algorithmic}[1]
        \Require index $A$, \texttt{u-input-data-transformed} ($\in \mathbb{R}^{N\times d}$).
        
        \Statex
        \For{$i,j\in\{1,2,\dots,N\}$}
            \If{$i < j$ and $i\neq j$}
                \State
                    \begin{equation*}
                        K_A^{(i,j)} \gets [\prod_{l\in A}\texttt{k-ANOVA}(u^{(i)}_{l},u^{(j)}_{l})]-1, \quad u^{(i)}\in\mathbb{R}^d,
                    \end{equation*} 
                    where $l$ is the $l$-th element of $u^{(i)}$
            \Else
                \begin{equation*}
                        K_A^{(i,j)} \gets 0
                \end{equation*} 
            \EndIf
        \EndFor
        \State \Return $K_A$
    \end{algorithmic}
\end{algorithmenv}

\begin{algorithmenv} \label{SM_algorithm:7_HSIC_order_1_anova_input_kernel}
    \caption{\texttt{k-ANOVA}}
    \begin{algorithmic}[1]
        \Require $a\in[0,1]$, $b\in[0,1]$.
        
        \Statex
        \State $k \gets 1 + (a-\frac{1}{2})(b-\frac{1}{2}) + \frac{1}{2}[{(a-b)}^2 - |a-b| + \frac{1}{6}]$
        \State \Return k
    \end{algorithmic}
\end{algorithmenv}

\subsection{Data generation}\label{sm:sec:data}
The data generation routine for the SpIn Sobol' indices in Algorithm \ref{SM_algorithm:data_generation_sobol} is tailored towards a pick-freeze estimator \cite{gamboa_pickFreeze, janon2014asymptotic_of_aggr_and_scal_PickFreeze}  and assumes suitable data generation processes and storage structures. 
The data generation routine for the HSIC-ANOVA indices in Algorithm \ref{SM_algorithm:data_generation_hsic} makes no such assumption.

\begin{algorithmenv} \label{SM_algorithm:data_generation_sobol}
    \caption{Data generation for SpIn Sobol' indices and FEM-based process model $\bm{g}(\bm{x})=G(\bm{u},\bm{x}, t)$, time $t\in\mathbb{R}_{>0}$, spatial parameter $\bm{x}$, random input vector $\bm{U}$, realization $\bm{u}\in\mathbb{R}^d$, and $\bm{g}(\bm{x})\in\mathbb{R}^p$.}
    \begin{algorithmic}[1]
        \Require distribution $\mathbb{P}_i$ for each random input $U_i$, $i\in\{1,\dots,d\}$, 
        \Statex \hspace{\algorithmicindent}
            \texttt{index-sets-to-calculate},
         \Statex \hspace{\algorithmicindent}
            \texttt{FEM-parameters}, 
            simulation final time $t_f$, 
        \Statex \hspace{\algorithmicindent}
            \texttt{save-directory}, 
            \texttt{mesh-directory}.
        
        \Statex
        \State $\bm{u}_I \gets$ realization of $\bm{U}$
        \AlgAssign{$\bm{g}_I(\bm{x})$}{
            solve FEM model $G(\bm{u}_I,t_f,\bm{x})$ according to \texttt{FEM-parameters} on mesh loaded from \texttt{mesh-directory}
        }
        \State $\texttt{save-directory} \gets \bm{u}_I$ input data
        \State $\texttt{save-directory} \gets \bm{g}_I(\bm{x})$ FEM solution coefficients
        \For{$A\in\texttt{index-sets-to-calculate}$}
            \State $\bm{u}_{II} \gets$ realization of $\bm{U}$.
            \State $\bm{u}_{III} \gets$ realization of $\bm{U}$.
            \State $\bm{u}_{\sim}\coloneqq(\bm{u}_{II_A,}\bm{u}_{III_{A^c}})$
            \AlgAssign{$\bm{g}_{A,II}(\bm{x})$}{
                solve FEM model $G(\bm{u}_{II},t_f,\bm{x})$ according to \texttt{FEM-parameters} on mesh loaded from \texttt{mesh-directory}
            }
            \AlgAssign{$\bm{g}_{A,\sim}(\bm{x})$}{
                solve FEM model $G(\bm{u}_{\sim},t_f,\bm{x})$ according to \texttt{FEM-parameters} on mesh loaded from \texttt{mesh-directory}
            }
            \State $\texttt{save-directory} \gets \bm{u}_{II}$ input-data
            \State $\texttt{save-directory} \gets \bm{u}_\sim$ input-data
            \State \texttt{save-directory/A} $\gets$ create new sub folder using string format of index $A$
            \State $\texttt{save-directory/A} \gets \bm{g}_{A,II}(\bm{x})$ FEM solution coefficients
            \State $\texttt{save-directory/A} \gets \bm{g}_{A,\sim}(\bm{x})$ FEM solution coefficients
        \EndFor
    \end{algorithmic}
\end{algorithmenv}

\begin{algorithmenv} \label{SM_algorithm:data_generation_hsic}
    \caption{Data generation for HSIC-ANOVA sensitivity indices and FEM-based process model $\bm{g}(\bm{x})=G(\bm{u},\bm{x}, t)$, time $t\in\mathbb{R}_{>0}$, spatial parameter $\bm{x}$, random input vector $\bm{U}$, realization $\bm{u}\in\mathbb{R}^d$, and $\bm{g}(\bm{x})\in\mathbb{R}^p$.}
    \begin{algorithmic}[1]
        \Require distribution $\mathbb{P}_i$ for each random input $U_i$, $i\in\{1,\dots,d\}$, 
        \Statex \hspace{\algorithmicindent} 
            \texttt{FEM-parameters}, 
            simulation final time $t_f$,
        \Statex \hspace{\algorithmicindent}
            \texttt{save-directory},
            \texttt{mesh-directory}.
        
        \Statex
        \State $\bm{u} \gets$ realization of $\bm{U}$
        \AlgAssign{$\bm{g}(\bm{x})$}{
            solve FEM model $G(\bm{u},t_f,\bm{x})$ according to \texttt{FEM-parameters} on mesh loaded from \texttt{mesh-directory}
        }
        \State $\texttt{save-directory} \gets \bm{u}$ input data
        \State $\texttt{save-directory} \gets \bm{g}(\bm{x})$ FEM solution coefficients
    \end{algorithmic}
\end{algorithmenv}
\bigskip

\subsection{Specific configurations for the hydrogen combustion problem}\label{sec:CDR specific}
We describe the simulation pipeline for the HSIC-ANOVA and the SpIn sensitivity indices by example of the convection-diffusion-reaction (CDR) test problem in Section~\ref{subsection:experims_model_systems_cdr}.
To generate the data for the estimators we sample the random inputs and for each input solve the PDE in the process model numerically. 
We store the resulting FEM approximation of just the temperature solution field. 
We save the results to memory, and then we pass the data directory as input to the respective estimation algorithms together with appropriate configurations to estimate the sensitivity index of interest. 
Note that the SpIn estimation Algorithm~\ref{SM_algorithm:1_SpInSobol_main} does not need the information of which random input realizations resulted in which output values. 
It does, however, assume that the data was generated via a pick-freeze scheme \cite{gamboa_pickFreeze, janon2014asymptotic_of_aggr_and_scal_PickFreeze} and stored with a specific structure, as outlined in Algorithm~\ref{SM_algorithm:data_generation_sobol}. 
The HSIC estimation Algorithm~\ref{SM_algorithm:1_SpInSobol_main} does not make such a data-generation-structure assumption.
However, it requires that each input-output data pair is stored together in a single file, representing one ``data point''.

The estimation of both the SpIn and HSIC-ANOVA indices was done as follows: The data production algorithm was called $n_\text{data}$ times to generate one data file in each call.
Each data file contains $s_\text{data}$ input-output pairs of the CDR model, where the inputs are correlated.
For the HSIC-ANOVA indices it holds $s_\text{data}=1$, whereas for the SpIn indices $s_\text{data}=21$, and the $21$ input samples in each data file are correlated according to the pick-freeze scheme.
Having the data files at hand, we select $n_\text{smpl}$ data files and each data file goes into the estimation of the respective sensitivity index.
We repeat this $n_\text{box}$ times to perform a statistical analysis of the estimated indices.
The relevant configurations are summarized in Table~\ref{table:numerical_estimation_parameters}. Note that all of the generated data was packaged and stored inside HDF5-formatted \cite{HDF5} files using the Python package h5py \cite{h5py}, where a JSON-formatted \cite{json} data ledger keeps track of the generated data inside each HDF5-formatted file, to speed-up read-in from storage during index calculation routines.

\paragraph{Spatially-integrated Sobol' indices} To generate the data to estimate the main and total effect SpIn indices, we call Algorithm~\ref{SM_algorithm:data_generation_sobol} 12,000 times, producing 12,000 data points as input for Algorithm~\ref{SM_algorithm:1_SpInSobol_main}.
Each call of Algorithm~\ref{SM_algorithm:data_generation_sobol} generates and stores to memory $(1+2\cdot \text{cardinality(\texttt{index-sets-to-calculate})})$-many input-output pairs of the form (model parameters, PDE solution). 
One input-output pair is used to calculate a realization of the indicator random field $Z_I^{(i)}$ in \eqref{eq:ZIZII}.
The remaining input-output pairs are used to calculate realizations of $Z_{II}^{(i)}$ in \eqref{eq:ZIZII} and $Z_{\sim A}^{(i)}$ in \eqref{eq:ZnotA} (hence the prefactor 2) for each index set $A$ which is required.
For example, for $d=5$ input parameters as in the CDR problem, we have $5$ main effect indices (i.e. index sets $A \in \mcP_5$ with $|A|=1$) and $5$ total effect indices, where each total effect is associated with the index set $A^c$ and $|A^c|=d-1$.
Thus, the cardinality of the index sets $\{A\in\mathcal{P}_d:|A|=1 \text{ or } |A|=d-1\}$ is $5+5=10$, and each call of Algorithm~\ref{SM_algorithm:data_generation_sobol} generates $1+2 \times 10 = 21$ realizations of the PDE solution. 
Note that in Figure~\ref{fig:results_SA_hsic_sobol_1000} we use a total of $n_\text{smpl} \times n_\text{box}= 10^3 \times 10 = 10^4$ data files, selected consecutively from the $12,000$ data files in $10$ batches of size $10^3$ each, to generate the box plots.
Also note that we only use $1+10=11$ input-output pairs in each data file since we only estimate the main effect indices $S_A^\text{spin}$ in \eqref{SA:spin} where $|A|=1$.

\paragraph{Kernel-based indices}
To generate the data for the estimation of the HSIC-ANOVA indices we call Algorithm~\ref{SM_algorithm:data_generation_hsic} $110,000$ times to generate $110,000$ input-output pairs of the CDR model, where each pair is stored in a separate data file, and where the inputs among pairs are statistically independent.
We split the $110,000$ data files and use $10,000$ data files to generate the box plots in Figure~\ref{fig:results_SA_hsic_sobol_1000}, and $100,000$ data files to generate the box plots in Figure~\ref{fig:results_SA_hsic_10000}.
For Figure~\ref{fig:results_SA_hsic_sobol_1000} we use a total of $10^3 \times 10 = 10^4$ data files, selected consecutively from the $10,000$ data files in $10$ batches of size $10^3$ each.
For Figure~\ref{fig:results_SA_hsic_10000} we use a total of $10^4 \times 10 = 10^5$ data files, selected consecutively from the $100,000$ data files in $10$ batches of size $10^4$ each.

We compute the FEM approximation of all four solution components of the CDR model in Section~\ref{subsection:experims_model_systems_cdr} using the FEniCS software (version 2019.1.0) \cite{fenics}, but we limit our study to the temperature field, and accordingly store the FEM solution data of only this field. The indicator function over the temperature field, $\mathds{1}_{\{Y_T(t=t_\text{obs},\xx,\uu)\leq T_\text{crit}\}}$, is associated with the feasible set. 
We use the FEM with CG1 elements for the numerical solution and approximate the indicator function in the FEM basis. This means that the approximate indicator function is equal to one on mesh vertices if the temperature constraint is satisfied, otherwise the approximate indicator function is equal to zero. 
This procedure is captured in Lines 16, 22, and 26 of Algorithm~\ref{SM_algorithm:1_SpInSobol_main} for the SpIn Sobol' indices and in Line 10 of Algorithm~\ref{SM_algorithm:3_HSIC_main} for the HSIC-ANOVA indices. 
Each experiment in Table~\ref{table:numerical_estimation_parameters} is performed in different \textit{observation windows} of the spatial domain, the so-called \texttt{test-domain} in the code accompanying this work. 
The relevant numerical procedure is captured in Line 7 of Algorithm~\ref{SM_algorithm:1_SpInSobol_main} for the SpIn Sobol' indices and in Line 4 of Algorithm~\ref{SM_algorithm:3_HSIC_main} for the HSIC-ANOVA indices. 
\begin{table}
    \centering
    \begin{tabular}{c|c||c|c||c|c}
        Index & Algorithm & $n_\text{data}$ & $s_\text{data}$&$n_\text{smpl}$ & $n_\text{box}$ \\
        \hline
        SpIn & \ref{SM_algorithm:1_SpInSobol_main} & $12,000$ & $21$ & 1000 & 10 \\
        HSIC &  \ref{SM_algorithm:3_HSIC_main} & $110,000$ & $1$& $\{1000, 10000\}$ & $10$ \\
    \end{tabular}
    \caption{Configuration summary of the numerical experiments with the CDR problem. The constraint is placed on the temperature field and reads $Y_T \leq 700$ K. $n_\text{data}$ is number of data files, $s_\text{data}$ is the number of (correlated) input samples in each data file, $n_\text{smpl}$ is the number of samples to estimate one SpIn or HSIC-ANOVA index, and $n_\text{box}$ is the number of repeated estimates to generate the box plots in Figure~\ref{fig:results_SA_hsic_sobol_1000} and Figure~\ref{fig:results_SA_hsic_10000}.}
    \label{table:numerical_estimation_parameters}
\end{table}
\bigskip

\bibliographystyle{abbrvurl}
\bibliography{literature}

\end{document}